\newtheorem{theorem}{Theorem}[section]
\newtheorem{lemma}[theorem]{Lemma}
\newtheorem{prop}[theorem]{Proposition}
\newtheorem{cor}[theorem]{Corollary}
\newtheorem*{mainmain}{Main Theorem}
\newtheorem{main}{Theorem}
\theoremstyle{definition}
\newtheorem{definition}[theorem]{Definition}
\newtheorem{rem}[theorem]{Remark}
\newtheorem{exa}[theorem]{Example}
\newtheorem*{question}{Questions}
\newcommand{\Z}{\mathbb{Z}}
\newcommand{\R}{\mathbb{R}}
\newcommand{\kkk}{\mathbf{k}}
\newcommand{\lk}{\mathrm{lk}}
\newcommand{\ics}{invariant cocompact subcomplex}
\newcommand{\sics}{c.s.}
\newcommand{\gal}{$\mathrm{gal}_Z(T)$}
\newcommand{\kol}{\color{black}}
\begin{document}

\title[Tits Alternative for $2$-dimensional recurrent complexes]{Tits Alternative for groups acting properly on $2$-dimensional recurrent complexes
}

\author[D.~Osajda]{Damian Osajda$^{\dag}$}
\address{Instytut Matematyczny,
	Uniwersytet Wroc\l awski\\
	pl.\ Grun\-wal\-dzki 2/4,
	50--384 Wroc\-{\l}aw, Poland}
\address{Institute of Mathematics, Polish Academy of Sciences\\
	\'Sniadeckich 8, 00-656 War\-sza\-wa, Poland}
\email{dosaj@math.uni.wroc.pl}
\thanks{$\dag \ddag$ Partially supported by (Polish) Narodowe Centrum Nauki, UMO-2018/30/M/ST1/00668. This work was partially supported by the grant 346300 for IMPAN from the Simons Foundation and the matching 2015-2019 Polish MNiSW fund.}

\author[P.~Przytycki]{Piotr Przytycki$^{\ddag}$\\
	\\	{\tiny{{with an appendix by J.\ McCammond, D.\ Osajda, and P.\ Przytycki}}}}
\address{
Department of Mathematics and Statistics,
McGill University,
Burnside Hall,
805 Sherbrooke Street West,
Montreal, QC,
H3A 0B9, Canada}

\email{piotr.przytycki@mcgill.ca}

\thanks{$\ddag$ Partially supported by NSERC and FRQNT}

\address{
	Mathematics Department,
	UC Santa Barbara,
	Santa Barbara, CA 93106}
\email{jon.mccammond@math.ucsb.edu}

\begin{abstract}
We prove the Tits Alternative for groups acting on $2$-dimen\-sional ``recurrent'' complexes {\kol with uniformly bounded cell stabilisers}. This class
of complexes includes, among others: $2$-dimensional {\kol Euclidean} buildings, $2$-di\-men\-sio\-nal systolic complexes,
$B(6)$-small cancellation complexes, and standard Cayley complexes for Artin groups of extra-large type.

In the appendix written jointly with Jon McCammond we extend the result to a class of $2$-dimensional
Artin groups containing all large-type Artin groups.
\end{abstract}

\maketitle

\section{Introduction}

Tits proved that every finitely generated linear group is either virtually solvable or contains a nonabelian free group \cite{Tits1972}.
In other words, each linear group $\mathbf{GL}_n(\kkk)$ satisfies the \emph{Tits Alternative}, saying that each of its finitely generated subgroups is virtually solvable or contains a nonabelian free group. It is believed that the Tits Alternative is common among `nonpositively curved' groups. However, up to now it has been shown only for few particular classes of groups.
Most notably, for: Gromov-hyperbolic groups \cite{Gromov1987}, mapping class groups \cite{Ivanov1984,McCarthy1985}, $\mathrm{Out}(F_n)$ \cite{BFH2000, BFH2}, fundamental groups of closed $3$-manifolds (by geometrisation, cf.\ \cite{KoZa2007}), fundamental groups of some nonpositively curved real-analytic $4$-manifolds \cite{Xie2004}, $\mathrm{CAT}(0)$ cubical groups \cite{SagWis2005}.
Whether $\mathrm{CAT}(0)$ groups satisfy the Tits Alternative remains an open question, even in the case
of groups acting properly and cocompactly on $2$-dimensional $\mathrm{CAT}(0)$ complexes.
\medskip

In this article we prove the Tits Alternative for groups acting 
on triangle complexes that are ``recurrent''. Here a \emph{triangle complex} is a $2$-dimensional simplicial complex~$X$ built of geodesic Euclidean triangles, see \cite[I.7.2]{BriHaf1999}. We postpone the general definition of ``recurrent'' till Section~\ref{sec:like}, and here we discuss examples instead. All the group actions that we consider are by combinatorial isometries. An action of a group $G$ is \emph{without inversions} if each element of~$G$ stabilising a cell fixes it pointwise.
{\kol The action is \emph{almost free} if there is a bound on the order of cell stabilisers. Note that an almost free action on a triangle complex with finitely many isometry types of simplices is proper in the sense of \cite[I.8.2]{BriHaf1999}.}

\begin{mainmain}
\label{t:tfTits}
Let $X$ be a simply connected triangle complex that is recurrent w.r.t.\ a finitely generated group $G$ acting {\kol almost freely} and without inversions. 
Then $G$ is virtually cyclic, or virtually $\mathbb{Z}^2$, or contains a nonabelian free group.
\end{mainmain}

In particular, by Remark~\ref{rem:hereditary} the same conclusion will hold for any finitely generated subgroup of $G$. In other words, $G$ satisfies the {Tits Alternative}.

For example, let $X$ be a $2$-dimensional {\kol Euclidean} building or a $2$-dimensio\-nal \emph{systolic complex}, which is a $\mathrm{CAT}(0)$ triangle complex with all edges of length $1$ and all triangles equilateral. We will show in Corollaries~\ref{cor:sys} and~\ref{cor:build} that $X$ has a subdivision recurrent with respect to any automorphism group of $X$. This implies the following for finitely generated subgroups of $G$.

\begin{main}
\label{main:cat0}
Let $X$ be a $2$-dimensional {\kol Euclidean} building or a $2$-dimensio\-nal systolic complex. Suppose that $G$ acts {\kol almost freely} on~$X$ (e.g.\ $G$ acts on $X$ properly and cocompactly). Then any subgroup of $G$ is virtually cyclic, or virtually $\mathbb{Z}^2$, or contains a nonabelian free group.
\end{main}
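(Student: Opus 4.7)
The plan is to deduce the theorem from the Main Theorem by passing to a suitable recurrent subdivision of $X$ and then invoking the Main Theorem for each finitely generated subgroup. First, using Corollaries~\ref{cor:sys} and~\ref{cor:build}, I would replace $X$ by a subdivision $X'$ that is recurrent with respect to the full automorphism group of $X$; in particular, $X'$ is recurrent with respect to every subgroup of $G$. After a further barycentric subdivision if needed, I may also assume that the induced action on $X'$ is simplicial and without inversions, as required by the Main Theorem. Since subdivision leaves the underlying topological space unchanged, simple connectivity is preserved and the actions of $G$ and of each of its subgroups remain proper on $X'$.

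For a finitely generated subgroup $H \le G$, the bound on the order of finite subgroups of $G$ is inherited by $H$, and the action of $H$ on $X'$ is proper, without inversions, and on a recurrent simply connected triangle complex. The Main Theorem then directly yields that $H$ is virtually cyclic, virtually $\mathbb{Z}^2$, or contains a nonabelian free subgroup. This is the content of the observation after the Main Theorem, and gives the Tits Alternative for $G$.

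To obtain the conclusion for an arbitrary subgroup $H \le G$, suppose $H$ contains no nonabelian free subgroup. Then the same is true of every finitely generated $H_0 \le H$, so by the previous paragraph each such $H_0$ is virtually cyclic or virtually $\mathbb{Z}^2$. Thus $H$ is locally virtually abelian of rank at most two. The uniform bound on the order of finite subgroups of $G$ excludes Pr\"ufer-type accumulation of torsion, while the rank bound of $2$ forbids unbounded growth of the free abelian rank across an ascending chain of finitely generated subgroups. Combining these facts with the proper action of $H$ on $X'$, one concludes that $H$ itself is virtually cyclic or virtually $\mathbb{Z}^2$.

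The main obstacle I expect, hidden in the first paragraph, is precisely the content of Corollaries~\ref{cor:sys} and~\ref{cor:build}: one needs to check that a $2$-dimensional building or systolic complex admits a subdivision that is recurrent with respect to its full automorphism group, and that passing to a further barycentric refinement preserves this property while eliminating inversions. Once these corollaries are in hand, the deduction reduces to a direct application of the Main Theorem to each finitely generated subgroup, followed by the routine ascending-union argument used above to extend the trichotomy to arbitrary subgroups.
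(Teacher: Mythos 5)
Your first two paragraphs match the paper's approach exactly: pass to the recurrent subdivision from Corollaries~\ref{cor:sys} and~\ref{cor:build}, take a further barycentric subdivision to kill inversions (cf.\ Remark~\ref{rem:subdivision}), and apply the Main Theorem to each finitely generated subgroup. So far this is correct and essentially identical to the paper.

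The gap is in your third paragraph, where you extend the trichotomy from finitely generated subgroups to arbitrary subgroups. You assert that ``the uniform bound on the order of finite subgroups excludes Pr\"ufer-type accumulation of torsion, while the rank bound of $2$ forbids unbounded growth of the free abelian rank,'' and that together with properness this gives stabilisation of the ascending union. This reasoning does not actually rule out an ascending chain such as $\Z < \tfrac{1}{2}\Z < \tfrac{1}{4}\Z < \cdots$, whose union $\Z[1/2]$ is locally cyclic, torsion-free, of rank~$1$, and not virtually cyclic. Neither a torsion bound nor a rank bound obstructs such a chain; what is needed is a concrete geometric input explaining why the proper action precludes it. The paper supplies precisely this via Lemma~\ref{l:flattorus}, which follows from the $\mathrm{CAT}(0)$ Flat Torus Theorem (\cite[II.7.5 and II.7.7(2)]{BriHaf1999}): a proper action on a $\mathrm{CAT}(0)$ triangle complex with finitely many isometry types of simplices is by semi-simple isometries with a uniform lower bound on nonzero translation lengths, forcing every ascending chain of virtually abelian subgroups to stabilise. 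Your argument invokes properness but never extracts this semi-simplicity/translation-length bound, which is the crux of the infinitely generated case; ``routine ascending-union argument'' conceals exactly the step that is not routine.
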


Ballmann and Brin proved that if $X$ is any $2$-dimensional $\mathrm{CAT}(0)$ complex, and $G$ acts on $X$ properly and cocompactly, then $G$ itself is virtually cyclic, or virtually $\mathbb{Z}^2$, or contains a nonabelian free group \cite{BallBrin1995}. However, it was only very recently that we were able to perform with Norin a first step to understand the subgroups of $G$, by proving that each of them is finite or contains $\Z$ \cite{NOP}.

Note that in Theorem~\ref{main:cat0}, as in many other applications of the Main Theorem, we will be able to remove the assumption that the group is finitely generated; {\kol see Section~\ref{sec:last}}.
However, we cannot remove the assumption on the uniform bound on the order of {\kol cell stabilisers, as the
following example that we learned from Pierre-Emmanuel Caprace shows. Namely, the wreath product $G= A_5 \wr \mathbb{Z}$, where $A_5$ denotes the alternating group on $5$ elements, acts on a $\mathrm{CAT}(0)$ square complex with finite cell stabilisers \cite[Prop~9.33]{Gen}. However, $G$ neither contains a nonabelian free subgroup, nor is virtually solvable. 
}

Other classes of recurrent complexes arise from complexes with various combinatorial
nonpositive-curvature-like features. This includes Cayley complexes for the standard presentations
of Artin groups of extra-large type (see Subsection~\ref{s:Ar} for the definition).

\begin{main}
\label{main:Artin}
Let $X$ be the Cayley complex for the standard presentation of an Artin group $A_\Gamma$ of extra-large type. Suppose that $G$ acts {\kol almost freely} on~$X$ (e.g.\ $G=A_\Gamma$). Then any subgroup of $G$ is virtually cyclic, or virtually~$\mathbb{Z}^2$, or contains a nonabelian free group.
\end{main}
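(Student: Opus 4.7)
The plan is to deduce Theorem~\ref{main:Artin} from the Main Theorem. This will involve three ingredients: a $G$-equivariant triangulation of the Cayley complex $X$, a verification that the resulting triangle complex is recurrent with respect to $G$, and an argument that upgrades the trichotomy from finitely generated subgroups of $G$ to arbitrary subgroups.

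\emph{Triangulation.} The Cayley complex $X$ of the standard presentation of $A_\Gamma$ has a $2m_{st}$-gonal $2$-cell for each Artin relator $\langle s,t\rangle_{m_{st}}=\langle t,s\rangle_{m_{st}}$, where the extra-large hypothesis forces every $m_{st}$ to be sufficiently large. I would $G$-equivariantly subdivide each such polygon by coning from its barycentre, obtaining a simply connected Euclidean triangle complex $X'$ on which $G$ still acts properly and combinatorially. Passing to a further barycentric subdivision if necessary ensures the action is without inversions while preserving simple connectivity and the uniform bound on orders of finite subgroups.

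\emph{Recurrence.} The main technical obstacle is to verify that $X'$ is recurrent with respect to $G$ in the sense of Section~\ref{sec:like}. Recurrence is governed by the combinatorial structure of the links $\lk(v,X')$, which for $X'$ come in two flavours: links at original Cayley vertices (built from circuits coming from the relators, each of length determined by $m_{st}$) and links at added cone vertices (which are circles of length $2m_{st}$). The classical small-cancellation-like properties of extra-large Artin presentations---in particular, that every relator boundary is sufficiently long and that distinct $2$-cells share only short arcs---should translate into the required recurrence condition after a case-by-case link analysis. This verification, relying on standard descriptions of links in Artin-group Cayley complexes (cf.\ Appel--Schupp), is the heart of the argument, and the step most likely to require careful bookkeeping.

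\emph{Removing finite generation.} With the Main Theorem applied to every finitely generated subgroup of $G$, I would extend the trichotomy to an arbitrary subgroup $H\le G$ by writing $H=\bigcup_n H_n$ as an ascending union of finitely generated subgroups. Each $H_n$ acts properly on $X$ and inherits the uniform bound on orders of finite subgroups, so by the Main Theorem each $H_n$ is virtually cyclic, virtually $\Z^2$, or contains a nonabelian free group. If some $H_n$ contains a nonabelian free group, so does $H$. Otherwise the uniform bound on finite subgroup orders bounds the index of the virtual $\Z^{\le 2}$ subgroup of each $H_n$ independently of $n$, which forces the ascending chain to stabilise and exhibits $H$ itself as virtually cyclic or virtually $\Z^2$.
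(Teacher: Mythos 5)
Your overall plan---triangulate the Cayley complex, verify recurrence to invoke the Main Theorem, then promote to all subgroups---matches the paper's route, but two of the three steps have genuine gaps.

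The recurrence step is not a ``case-by-case link analysis,'' and the small-cancellation intuition you invoke does not directly apply: in the Cayley complex of $A_\Gamma$ the $2$-cells are not uniformly bounded, pieces can be entire syllables (arbitrarily long paths labelled $a^k$), and links at vertices are quite complicated. The paper instead chooses a very specific set of directions $A$ on the barycentric subdivision (Example~\ref{exa:polygons}: segments perpendicular to opposite edge-pairs of each $2n$-gon, dividing them in ratio $1\colon 3$), with the effect that the sequences appearing in Definition~\ref{def:build-like}(v) are exactly walks along \emph{hypergraphs} (the analogue of walls). Condition (v) then becomes the statement that every hypergraph in $X$ is an embedded tree. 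This is Proposition~\ref{l:hypgr}, whose proof is a substantial piece of work: it requires a new combinatorial lemma (Lemma~\ref{lem:corner}, a strengthening of Appel--Schupp that produces two ``long'' subwords of a relator in any trivial cyclically reduced word), Lemma~\ref{l:block} controlling how hypergraphs traverse blocks, and a van Kampen diagram argument to rule out hypergraph cycles. Your sketch names Appel--Schupp but misses both the reduction to hypergraphs and the fact that the argument takes place in diagrams over the presentation rather than in the Cayley complex itself; this is where essentially all of the content of the theorem lives.

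The infinite-generation step is also incorrect as stated. A bound on the orders of finite subgroups does \emph{not} bound the index of the virtually abelian piece uniformly, and even if it did, an ascending chain of virtually $\Z^2$ subgroups need not stabilise (think of a group like $\Z[1/2]^2$ exhausted by copies of $\Z^2$). What actually forces stabilisation is a Flat Torus Theorem: the paper invokes \cite{HuaOsa2017} to get an action of $G$ on a uniformly locally finite systolic complex and then applies Lemma~\ref{l:flattorus_B6} (the systolic ascending chain condition for virtually abelian subgroups, which rests on the systolic Flat Torus Theorem). Without some such geometric input, the chain argument has no reason to terminate.

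The triangulation step is essentially fine, though the paper's concrete choice (barycentric subdivision with the specific $A$ of Example~\ref{exa:polygons}) matters because the recurrence argument depends on $A$ producing precisely the hypergraph walks.
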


In Appendix A written jointly with Jon McCammond we extend Theorem~\ref{main:Artin} to a class of $2$-dimensional Artin groups
containing all large-type Artin groups. In the case where $G=A_\Gamma$ we extend Theorem~\ref{main:Artin} to all $2$-dimensional~$A_\Gamma$ with $W_\Gamma$ hyperbolic in \cite{MP}. We will give there an account on the current state of affairs concerning the Tits Alternative for other classes of Artin groups.

Another class of recurrent complexes arises from simply connected $B(6)$-small cancellation complexes (see Subsection~\ref{s:B6} for the definition and details).

\begin{main}
\label{main:B6}
Let $X$ be a simply connected $B(6)$-small cancellation complex. Suppose that $G$ acts {\kol almost freely} on $X$. Then any finitely generated subgroup of $G$ is virtually cyclic, or virtually $\mathbb{Z}^2$, or contains a nonabelian free group.
\end{main}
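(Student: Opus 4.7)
The plan is to reduce Theorem~\ref{main:B6} to the Main Theorem. First I would replace the $B(6)$ complex $X$, whose $2$-cells are polygons, with a $G$-equivariant triangle complex $\tilde X$ by coning each polygonal $2$-cell off its barycentre, choosing edge lengths so that the resulting faces are Euclidean triangles. This yields a simply connected triangle complex on which $G$ still acts by combinatorial isometries. To guarantee that the action is without inversions I would then pass to the barycentric subdivision of $\tilde X$ (or to a finite-index subgroup of $G$); in either case the uniform bound on orders of finite subgroups is preserved.

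The heart of the argument is to show that $\tilde X$ is recurrent with respect to each finitely generated subgroup $H \le G$. Since the introduction explicitly flags simply connected $B(6)$ small-cancellation complexes as a source of recurrent examples, I would expect a dedicated proposition in the style of Corollaries~\ref{cor:sys} and~\ref{cor:build} to establish this. The input should be the Gersten--Short style combinatorial nonpositive curvature enforced by the $B(6)$ hypothesis, which controls the structure of links of vertices and of short combinatorial disks in $\tilde X$, and in particular the tree-like intersection pattern of pieces. Matching this structure against the (postponed) definition of recurrence from Section~\ref{sec:like} is what I expect to be the main technical obstacle; this is presumably also where finite generation of $H$ enters, since it lets one extract a compact subcomplex whose $H$-translates carry enough of the local combinatorial pattern for the recurrence condition to hold, whereas non-finitely generated subgroups could fail to see the pattern globally.

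Finally, given a finitely generated $H \le G$, the action of $H$ on $\tilde X$ inherits properness and the bound on the orders of finite subgroups from $G$. After passing if necessary to a finite-index subgroup $H_0 \le H$ that acts without inversions, the previous step gives that $\tilde X$ is recurrent with respect to $H_0$. The Main Theorem applied to $H_0 \curvearrowright \tilde X$ then yields that $H_0$ is virtually cyclic, virtually $\mathbb{Z}^2$, or contains a nonabelian free group, and this trichotomy lifts from $H_0$ to $H$ since $[H:H_0] < \infty$. The steps involving subdivision and restriction to subgroups are formal; the entire substance of the proof lies in the recurrence verification.
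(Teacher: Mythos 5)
Your overall strategy (build a $G$-equivariant triangle complex, show it is recurrent, and invoke the Main Theorem) is the right skeleton and is the paper's strategy too, but the proposal has a genuine gap exactly at the place you flag as ``the main technical obstacle'': you never actually verify recurrence, and more importantly you do not identify the mechanism that makes the verification possible. The paper first subdivides every edge of $X$ so that all boundary cycles have even length, then realises $2$-cells as regular Euclidean $2n$-gons and takes the barycentric subdivision $X'$ with the explicit set $A$ from Example~\ref{exa:polygons} (directions along segments joining opposite edges, dividing them in the ratio $1\colon3$). Properties (i)--(iv) of Definition~\ref{def:build-like} are immediate from that choice. The substantive point is property~(v): a finite sequence $a_0,\dots,a_n$ with $a_{i+1}\in H(I(a_i))$ yields a local geodesic that tracks a \emph{hypergraph} $\Lambda\to X$ in Wise's sense (pairs of antipodal edges in a $2$-cell), and property~(v) then follows because hypergraphs in simply connected $B(6)$ complexes are embedded, by \cite[Cor~3.12]{Wise2004}. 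Without this observation one has no way to rule out the closed-up ``billiard'' trajectories that property~(v) forbids; ``tree-like intersection pattern of pieces'' gestures at the right phenomenon but is not a proof, and your coning construction does not come with a candidate $A$ for which the verification could even be attempted.

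There is also a misconception in your account of where finite generation enters. You suggest that finite generation of $H$ is used in the recurrence verification, so that a compact subcomplex ``carries enough of the local pattern'', and that non-finitely generated subgroups ``could fail to see the pattern globally.'' In fact the paper establishes recurrence of $X'$ with respect to the \emph{full automorphism group} of $X$, with an $A$ that is canonically defined from the polygonal structure and hence invariant under every automorphism; recurrence for a subgroup $H$ is then automatic via Remark~\ref{rem:hereditary}. Finite generation plays no role in that step. It appears only because it is a standing hypothesis of the Main Theorem (and is what makes Lemma~\ref{l:noics} work). Finally, a minor point: the paper handles the ``without inversions'' requirement by barycentric subdivision (cf.\ Remark~\ref{rem:subdivision}) rather than by passing to a finite-index subgroup; your alternative would work, but is an unnecessary detour.
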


Let us note that Wise \cite{Wise2004} associated to each simply connected $B(6)$ complex~$X$ a $\mathrm{CAT}(0)$ cube complex $C$. Furthermore,
in \cite{SagWis2005} the Tits Alternative is shown for groups acting {\kol almost freely} on finite dimensional
$\mathrm{CAT}(0)$ cube complexes. However, the complex $C$ associated to
a simply connected $B(6)$ complex $X$ might not be finitely dimensional --- this happens e.g.\ when there is no bound on
the size of the $2$-cells in $X$. Therefore, the results from \cite{SagWis2005} do not imply Theorem~\ref{main:B6}.

The method of proving the Tits Alternative presented in this paper raises the following natural questions.
\begin{question}
Simplicial subdivisions of which $2$-dimensional combinatorial complexes can be metrised as recurrent complexes? Can it be done for $C(6)$-small cancellation complexes? What about Cayley complexes for standard presentations of $2$-dimensional Artin groups?
\end{question}

\medskip

\noindent \textbf{Idea of proof of the Main Theorem.}
For simplicity we assume that the action of $G$ is free and that $X$ is systolic. Supposing additionally that $X$ is countable, we exhaust the quotient $\overline X=X/G$ with compact subcomplexes $X_1\subset X_2\subset \cdots$. The fundamental groups $G_i$ of $X_i$ have direct limit $G$.

Collapsing we remove the free edges from $X_i$. We focus first on the case where some $X_i$ is \emph{thick} meaning that it has an edge $e$ of degree $\geq 3$. Consider the space of all local geodesics in $X_i$ that are concatenations of segments $\ldots, a_{-1}a_0, a_0a_1,a_1a_2,\ldots$ {\kol (where $a_i$ denote points),}
as indicated in Figure~\ref{f:0} on the right. We equip that space with a finite measure $\mu^*$ assigning to each `cylinder' of geodesics passing through prescribed consecutive segments  $a_0a_1,a_1a_2,\ldots,a_{n-1}a_n$ the value $\prod_{i=1}^{n-1} \frac{1}{\deg a_i-1}$, where $\deg a_i$ is the degree of the edge containing $a_i$. {\kol This is inspired by the work of Ballmann and Brin \cite{BallBrin1995}, who put a similar measure on a far larger space of geodesics.}
Using Poincar\'e recurrence \`a la \cite{BallBrin1995} we can find a local isometric embedding~$f$ of the dumbbell graph $\overline{\Gamma}$ (see Figure~\ref{f:0} on the left) into~$X_i$ with the following properties. Namely, $f$ sends the vertices of $\overline{\Gamma}$ into~$e$ and the edges of $\overline{\Gamma}$ into concatenations of the segments in Figure~\ref{f:0} on the right,
terminating perpendicularly to $e$. In particular, $f(\overline{\Gamma})$ avoids the vertices of~$X_i$ and hence the stabiliser in $G$ of the lift of $\overline{\Gamma}$ to $X$ contains a nonabelian free group.
\begin{figure}[h!]
	\centering
	\includegraphics[width=0.7368\textwidth]{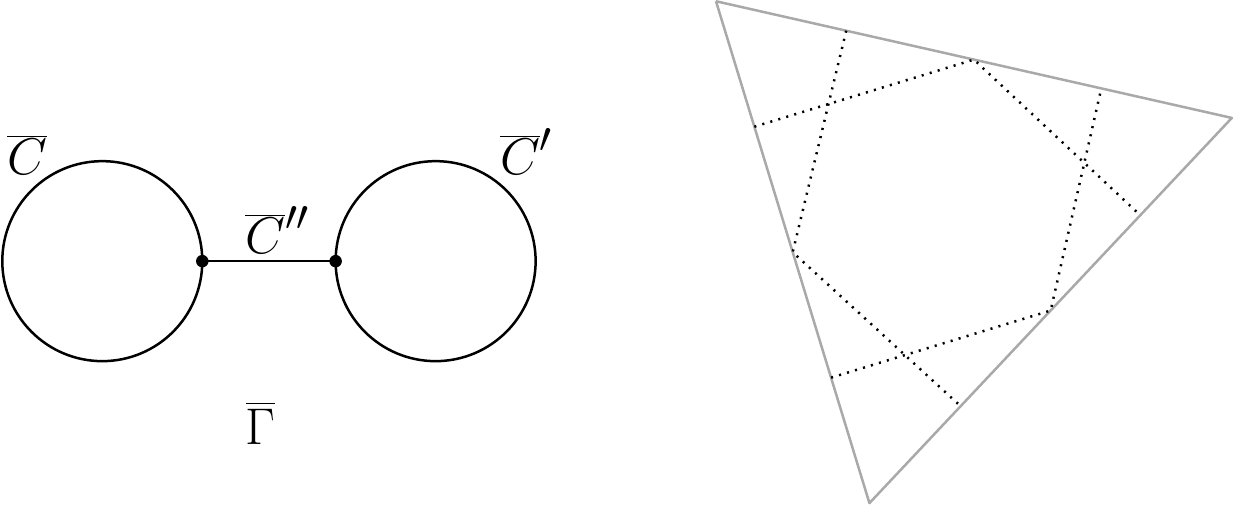}
	\caption{}
	\label{f:0}
\end{figure}

In the case where $X_i$ is not thick, it is a union of a graph and a dimension~$2$ pseudomanifold. The components of the pseudomanifold are $\pi_1$-injective in~$\overline X$, since otherwise attaching compressing discs puts us back in the thick case. If one such component is a hyperbolic surface $\Sigma$, we find a nonabelian free group in $\pi_1(\Sigma)< G$. Otherwise, each $G_i$ is a free product of some copies of $\Z,\Z^2$ and the Klein bottle group, which satisfies the Tits Alternative. One can arrange that the number of factors is bounded by a constant independent of $i$, and then use the Hopfian property to deduce that the sequence $G_1\to G_2\to\cdots$ stabilises. This shows that $G$ coincides with some $G_i$.

\smallskip

{\kol We believe that the overall method of our proof can be extended to treat all $2$-dimensional $\mathrm{CAT}(0)$ complexes. In particular, the `no thick subcomplexes' part (cf.\ Proposition~\ref{prop:OP}) is valid in such a general setting. To treat thick subcomplexes, one needs to find a method of `closing' geodesics without the use of the additional structure of the recurrent complex. It seems that finding free subgroups could work then also in higher dimensions, for analogues of thick subcomplexes. However, at the moment we do not know how to proceed in the `no thick subcomplexes' case in such higher dimensional setting, even for some restricted (combinatorial) classes of complexes, e.g.\ systolic complexes or Euclidean buildings.}

\smallskip

\noindent \textbf{Organisation.} In Section~\ref{sec:like} we define the main object of our interest, the recurrent complexes. We provide main examples and basic properties, and we show how to find nonabelian free subgroups given thick subcomplexes. In Section~\ref{s:inv} we treat the case where
there are no thick subcomplexes and we prove the Main Theorem.
In Section~\ref{s:B6Ar} we provide applications to $B(6)$-small cancellation complexes and Artin groups of extra-large type, proving Theorem~\ref{main:B6} and the finitely generated case of Theorem~\ref{main:Artin}. We discuss the case of infinitely generated subgroups and we complete the proofs of Theorems~\ref{main:cat0} and~\ref{main:Artin} in Section~\ref{sec:last}. {\kol In Appendix A written jointly with Jon McCammond we present the aforementioned extention of Theorem~\ref{main:Artin}.}
\medskip

\noindent \textbf{Acknowledgements.} We thank Pierre-Emmanuel Caprace, Sam Shepherd, and the anonymous referees for valuable comments. This paper was written while D.O.\ was visiting McGill University.
We would like to thank the Department of Mathematics and Statistics of McGill University
for its hospitality during that stay.

\section{Recurrent complexes}
\label{sec:like}
In this section we present a variant of the constructions introduced in \cite{BallBrin1995}.

\begin{definition}
\label{def:build-like}
Let $X$ be a triangle complex, i.e.\ a 2-dimensional simplicial complex built of geodesic Euclidean triangles, with
an action of a group~$G$. Let $x\in X^1-X^0$, let $e$ be the edge containing $x$ and let $T$ be a triangle containing $e$.
Then $\lk_x T$ denotes the open half-circle of directions at $x$ in $T$ that are transverse to $e$. By $\deg x$ we denote the degree of $e$, i.e.\ the number of triangles containing $x$.

For $v\in \lk_x T$, let $H(v)$ be the union of the directions $v'\in \lk_x T'$ with $T'\neq T$ such that there is a geodesic through $x$ in $T\cup T'$ with directions $v$ and $v'$. Note that for each triangle $T'$ containing $x$ with $T'\neq T$ there exists a unique such $v'$. Thus $|H(v)|=\deg x -1$. We have $v'\in H(v)$ if and only if $v\in H(v')$

Furthermore, for $v\in \lk_x T$ suppose that the geodesic in $T$ with the starting direction $v$ terminates at a point $x'\in X^1-X^0$. Then we denote its ending direction by $I(v)\in \lk_{x'}T$. Note that $I(I(v))=v$.

We say that $X$ is \emph{recurrent with respect to $G$} if there is a $G$-invariant subset $A$ of the union of all $\lk_x T$ such that all the following hold:
\begin{enumerate}[(i)]
\item
for each triangle $T$ the set of $a\in A$ with $a$ in some $\lk_x T$ is finite,
\item
for each $a\in A$ we have $H(a)\subset A$,
\item
for each $a\in A$ we have that $I(a)$ is defined and belongs to $A$,
\item
for each edge $e$ of degree $\geq 3$ there exists $x\in e$ such that for some (hence any by (ii)) triangle $T$ containing $e$ the direction in $\lk_x T$ perpendicular to $e$ belongs to $A$,
\item
there is no finite sequence $a_0,a_1,\ldots, a_n$, such that for all $0\leq i<n$ we have $a_{i+1}\in H(I(a_i))$, and $a_0=a_n$ or $a_0=I(a_n)$.
\end{enumerate}
\end{definition}

\begin{rem}
\label{rem:hereditary}
If a triangle complex $X$ is recurrent w.r.t.\ $G$ and $X'\subseteq X$ is a $G'$-invariant subcomplex, for some $G'< G$, then $X'$ is \emph{recurrent} w.r.t.\ $G'$.
\end{rem}

\begin{rem}
\label{rem:prop5}
If $X$ is $\mathrm{CAT}(0)$, then its local geodesics are global geodesics and hence embed, and consequently
Definition~\ref{def:build-like}(v) holds automatically for any $A$.
\end{rem}

\begin{exa}
\label{exa:1}
Suppose that $X$ admits a simplicial map $\rho$ to a simplicial complex consisting of only one triangle with angles $\frac{\pi}{2},\frac{\pi}{4},\frac{\pi}{4}$ such that $\rho$ restricted to each triangle of $X$ is an isometry. Then $X$ has $A$ satisfying Definition~\ref{def:build-like}(i)-(iv) w.r.t.\ any automorphism group of $X$. Indeed, it suffices to define $A\cap \lk_x T$ with $x$ in an edge $e$ to be
\begin{itemize}
\item the vector perpendicular to $e$, for $e$ the long edge and $x$ dividing $e$ in the ratio $1\colon 3$,
\item the vectors at angles $\frac{\pi}{4}$ to $e$, for $e$ the long edge and $x$ the midpoint of $e$,
\item the vectors at angles $\frac{\pi}{4}$ and $\frac{\pi}{2}$ to $e$, for $e$ the short edge and $x$ the midpoint of $e$,
\item empty otherwise.
\end{itemize}
In other words, $A$ is the union of the directions at the boundary of the two billiard trajectories in Figure~\ref{f:1a}.
\end{exa}

\begin{figure}[h!]
        \centering
        \includegraphics[width=0.90\textwidth]{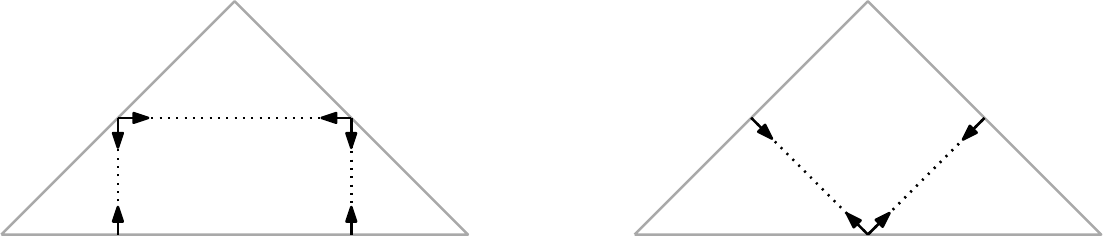}
        \caption{}
        \label{f:1a}
\end{figure}

\begin{exa}
\label{exa:2}
Suppose that $X$ admits a simplicial map $\rho$ to a simplicial complex consisting of only one triangle with angles $\frac{\pi}{2},\frac{\pi}{3},\frac{\pi}{6}$ such that $\rho$ restricted to each triangle of $X$ is an isometry. Then $X$ has $A$ satisfying Definition~\ref{def:build-like}(i)-(iv) w.r.t.\ any automorphism group of $X$. Indeed, it suffices to define $A$ in each triangle as the union of the directions at the boundary of the two billiard trajectories in Figure~\ref{f:2}, where $y$ is, say, the edge midpoint.
\end{exa}

\begin{figure}[h!]
        \centering
        \includegraphics[width=0.91\textwidth]{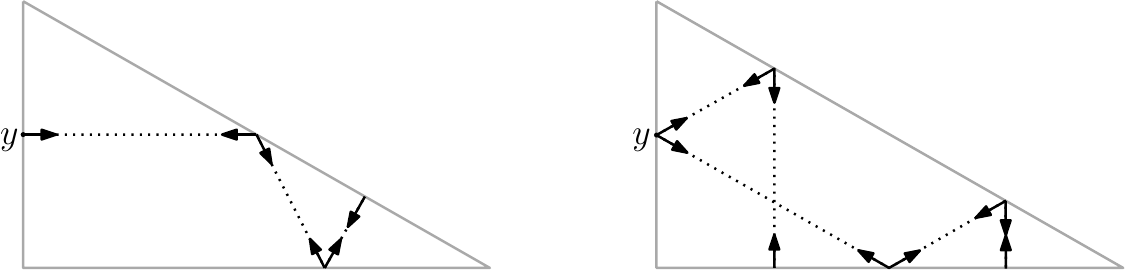}
        \caption{}
        \label{f:2}
\end{figure}

We have the following immediate consequence of Example~\ref{exa:2} and Remark~\ref{rem:prop5}.

\begin{cor}
\label{cor:sys}
Let $X$ be a $2$-dimensional systolic complex. Then the barycentric subdivision of $X$ is recurrent with respect to any automorphism group.
\end{cor}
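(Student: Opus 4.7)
The plan is to apply Example~\ref{exa:2} and Remark~\ref{rem:prop5} directly, as suggested by the wording preceding the corollary. First I would verify the elementary geometric claim that the barycentric subdivision of a unit equilateral triangle decomposes it into six congruent right triangles with angles $\frac{\pi}{2},\frac{\pi}{3},\frac{\pi}{6}$. Indeed, each small triangle has as vertices an original vertex $v$, an adjacent edge midpoint $m$, and the centroid $b$; since in an equilateral triangle the medians coincide with the altitudes and the angle bisectors, the angles at $v,m,b$ are $\frac{\pi}{6},\frac{\pi}{2},\frac{\pi}{3}$ respectively, and the side lengths work out by the law of sines to $\frac{1}{2},\frac{\sqrt{3}}{3},\frac{\sqrt{3}}{6}$.

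Let $X'$ denote the barycentric subdivision of $X$ with the induced triangle-complex metric (so each of the six sub-triangles of an original triangle of $X$ is isometric to the standard $\frac{\pi}{2},\frac{\pi}{3},\frac{\pi}{6}$-triangle). Then the assignment sending every sub-triangle of $X'$ to the model triangle by the unique angle-matching isometry defines a simplicial map $\rho\colon X'\to \Delta$ that restricts to an isometry on every triangle of $X'$. This is precisely the hypothesis of Example~\ref{exa:2}, so that example produces a set $A$ satisfying conditions (i)--(iv) of Definition~\ref{def:build-like}. The construction of $A$ depends only on the simplicial structure of $X'$ (the billiard trajectories are intrinsic to $\Delta$ and are pulled back by $\rho$), so $A$ is invariant under any group of simplicial automorphisms of $X'$; in particular, since any automorphism of $X$ induces an automorphism of its barycentric subdivision $X'$, the set $A$ is invariant under the full automorphism group of $X$.

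For condition (v) I would invoke Remark~\ref{rem:prop5}: the underlying metric space of $X'$ coincides with that of $X$, which is $\mathrm{CAT}(0)$ by the definition of a systolic complex used in this paper. Hence local geodesics in $X'$ are global geodesics and embed, so condition (v) holds automatically for our $A$.

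There is no real obstacle here; the only thing to be careful about is to choose the metric on $X'$ so that its underlying space coincides isometrically with $X$ (rather than rescaling the sub-triangles), which is what guarantees both the applicability of Example~\ref{exa:2} via the map $\rho$ and the $\mathrm{CAT}(0)$ property needed for Remark~\ref{rem:prop5}. With these two ingredients in place the corollary is immediate.
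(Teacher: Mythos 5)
Your proposal is correct and follows exactly the route the paper intends: the paper states the corollary as an ``immediate consequence'' of Example~\ref{exa:2} and Remark~\ref{rem:prop5}, and your write-up fills in precisely the right details -- verifying that barycentric subdivision of an equilateral triangle yields $(\frac{\pi}{2},\frac{\pi}{3},\frac{\pi}{6})$-triangles, constructing the color-respecting simplicial folding $\rho$ onto the model triangle, noting $A$ is preserved by all induced automorphisms, and invoking the $\mathrm{CAT}(0)$ property of $X=X'$ for condition~(v).
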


\begin{cor}
\label{cor:build}
Let $X$ be a $2$-dimensional Euclidean building of type $W$ with its usual geometric realisation, where each chamber is realised as a Euclidean triangle of angles $\frac{\pi}{m_{st}}$ in the cases $W=\widetilde A_2, \widetilde C_2, \widetilde G_2$ or a square in the case $W=(\widetilde I_2)^2=D_\infty\times D_\infty$. Then $X$ has a subdivision $X^*$ that is recurrent with respect to any automorphism group $G$ of $X$, and such that $G$ acts on~$X^*$ without inversions.
\end{cor}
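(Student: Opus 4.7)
\medskip

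\noindent \textbf{Proof plan.}
The plan is to handle the four Coxeter types $W \in \{\widetilde A_2, \widetilde C_2, \widetilde G_2, (\widetilde I_2)^2\}$ separately. In each case I would exhibit a subdivision $X^*$ of $X$ and invoke one of Corollary~\ref{cor:sys}, Example~\ref{exa:1}, or Example~\ref{exa:2} to produce a set $A$ satisfying Definition~\ref{def:build-like}(i)--(iv); condition (v) comes for free from Remark~\ref{rem:prop5}, since every Euclidean building is $\mathrm{CAT}(0)$. Inversion-freeness would then be verified case by case by exploiting the new vertex types introduced by the subdivision.

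For $W = \widetilde A_2$ the chambers of $X$ are equilateral triangles and the vertex links are incidence graphs of generalised $3$-gons (projective planes), which are bipartite of girth $6$ and contain no triangles, hence are $6$-large flag complexes. Thus $X$ is a $2$-dimensional systolic complex and Corollary~\ref{cor:sys} yields a recurrent structure on $X^* := X'$. For $W = \widetilde G_2$ each chamber is already a $\frac{\pi}{2}, \frac{\pi}{3}, \frac{\pi}{6}$ triangle, so Example~\ref{exa:2} applies directly to $X^* := X$. For $W = \widetilde C_2$ I would take $X^*$ to be the subdivision obtained by cutting each chamber along the altitude from its right-angle vertex, producing two similar $\frac{\pi}{2}, \frac{\pi}{4}, \frac{\pi}{4}$ half-chambers to which Example~\ref{exa:1} applies. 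For $W = (\widetilde I_2)^2$ I would subdivide each square into eight congruent $\frac{\pi}{2}, \frac{\pi}{4}, \frac{\pi}{4}$ triangles via the two diagonals and the horizontal and vertical medians, and again invoke Example~\ref{exa:1}.

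To verify that $G$ acts on $X^*$ without inversions I would argue as follows. For $\widetilde A_2$ this is automatic, since barycentric subdivision labels every vertex of $X'$ by the dimension of the simplex of $X$ it barycenters; this labelling is preserved by every simplicial automorphism, and the three vertices of any simplex of $X'$ have pairwise distinct labels. For $\widetilde G_2$ the Coxeter diagram admits no non-trivial symmetry, so every element of $G$ is type-preserving, and the three distinct vertex types of each chamber rule out inversions. For $\widetilde C_2$ the unique diagram symmetry swaps the two $\frac{\pi}{4}$ vertex types, and the altitude subdivision forces any realising automorphism to exchange the two half-chambers inside each stabilised chamber rather than invert it, while the new midpoints of hypotenuses form an invariant fourth family that saves the remaining simplices. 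For $(\widetilde I_2)^2$ the vertices of $X^*$ partition into three $G$-invariant families (corners, edge midpoints, square centres), and every simplex of $X^*$ has vertices in pairwise distinct families.

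The step I expect to be most delicate is the $\widetilde C_2$ case: unlike barycentric subdivision, the altitude subdivision does not separate all vertex types automatically, so one has to check by hand both that the set $A$ produced by Example~\ref{exa:1} is invariant under non-type-preserving elements of $G$ and that no sub-chamber, half-hypotenuse, or altitude of $X^*$ is stabilised without being fixed pointwise.
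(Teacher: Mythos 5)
Your proposal is correct and follows essentially the same route as the paper: the same four subdivisions ($X^*=X$ for $\widetilde G_2$, the altitude subdivision for $\widetilde C_2$, the barycentric subdivision for $\widetilde A_2$ and for $(\widetilde I_2)^2$), combined with Examples~\ref{exa:1} and~\ref{exa:2} and Remark~\ref{rem:prop5}. Your detour through Corollary~\ref{cor:sys} in the $\widetilde A_2$ case and your explicit case-by-case verification of inversion-freeness (which the paper leaves implicit) are fine but do not change the substance of the argument.
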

\begin{proof} Since $X$ is $\mathrm{CAT}(0)$, by Remark~\ref{rem:prop5} we have that Definition~\ref{def:build-like}(v) holds automatically. If $W=\widetilde G_2$, it suffices to take $X^*=X$ and use Example~\ref{exa:2}. If $W=\widetilde A_2$, we take $X^*$ to be the barycentric subdivision of $X$ and we use Example~\ref{exa:2} as well. If $W=\widetilde C_2$, let $X^*$ be obtained from $X$ by subdividing each triangle into two similar triangles along the altitude from the right angle. We then use Example~\ref{exa:1}. Finally, if $W=(\widetilde I_2)^2$, let $X^*$ be the barycentric subdivision of $X$ and use Example~\ref{exa:1}.
\end{proof}

\begin{rem}
\label{rem:subdivision}
In fact, if $X$ is a triangle complex with finitely many isometry types of `simplices with specified directions in $A$', recurrent w.r.t.\ $G$, then its barycentric subdivision $X'$ is also recurrent w.r.t.\ $G$, and consequently in the Main Theorem one can remove the assumption that $G$ acts without inversions.

Indeed, first note that for $\varepsilon$ sufficiently small, we can replace $A$ by $A'$ whose geodesic segments with starting direction $a'\in A'$ and ending direction $I(a')$ constitute the boundary of the $\varepsilon$-neighbourhood of analogous segments from $a\in A$ to $I(a)$. Secondly, except for finitely many of such $\varepsilon$, these segments from $a'\in A'$ to $I(a')$ do not pass through the vertices of $X'$, and hence they show the recurrence of $X'$ w.r.t.\ $G$.
\end{rem}

Recurrent complexes are designed to satisfy the following lemma.

\begin{definition}
\label{def:thick}
A $2$-dimensional simplicial complex is \emph{essential} if every edge has degree at least $2$, and none of connected components is a single vertex. An essential triangle complex is \emph{thick} if it has an edge of degree at least $3$.
\end{definition}

\begin{lemma}
\label{lem:thick}
Suppose that a triangle complex $X$ has all edges of finite degree, is thick, and is recurrent with $A/G$ finite. Then $G$ contains a nonabelian free group.
\end{lemma}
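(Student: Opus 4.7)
We follow the strategy of \cite{BallBrin1995}: equip the space of bi-infinite $A$-compatible direction sequences with a shift- and $G$-invariant Ballmann-Brin measure of finite total mass on the quotient, use Poincar\'e recurrence to extract a locally isometric map of a dumbbell graph into $X/G$, and certify that the induced map $F_2\to G$ is injective by condition~(v).

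Let $\mathcal L=\{(a_i)_{i\in\Z}\in A^{\Z}:a_{i+1}\in H(I(a_i))\}$, carrying the shift $\sigma$ and the natural $G$-action. Assign the cylinder $[a_0,\dots,a_n]$ the weight $\prod_{i=1}^{n-1}(\deg a_i-1)^{-1}$, where $\deg a_i$ is the degree of the edge carrying $a_i$. Summing over the $\deg(a_0)-1$ possible predecessors $a_{-1}$ shows this defines a $\sigma$-invariant measure $\mu$, and $G$-invariance is immediate from the $G$-equivariance of $A$, $H$, $I$, and the degree function. Since $A/G$ is finite and each orbit has only finitely many outgoing transitions, there are only finitely many $G$-orbits of length-two cylinders (each of mass one), so the pushforward $\mu^{*}$ to $\mathcal L/G$ is finite.

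By (iv) fix an edge $e$ of degree $\geq 3$, a point $x\in e$, and a perpendicular direction $a_0\in A$ at $x$; by (ii) all of the remaining $\deg(e)-1\geq 2$ perpendiculars at $x$ also lie in $A$. Applying Poincar\'e recurrence to ever-finer cylinders around $a_0$ yields a recurrent $\tilde y=(a_i)\in\mathcal L$ and, for each window size $k$, a return time $n_k>0$ with $g_k\in G$ satisfying $g_k\cdot a_{n_k+j}=a_j$ for $|j|\leq k$; each $g_k$ realises a closed local geodesic loop in $X/G$ through $x$ perpendicular to $e$. Repeating the construction starting from a \emph{different} perpendicular direction $a_0'\in H(a_0)\subset A$ yields a second loop, and a further invocation of recurrence provides a short local-geodesic ``bridge'' perpendicular to $e$ at both endpoints. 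Together these assemble a map $f\colon\overline\Gamma\to X/G$ from the dumbbell $\overline\Gamma$ (two trivalent vertices in $e$, two loops and one bridge), sending every edge of $\overline\Gamma$ to a concatenation of triangle-crossing segments perpendicular to $e$.

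At each trivalent vertex of $\overline\Gamma$ the three incident directions are three distinct perpendicular midpoints in three of the $\geq 3$ triangles around $e$, and any two such midpoints are at distance exactly $\pi$ in the book-of-arcs link $\lk_x X$; hence $f$ is locally isometric, and $f(\overline\Gamma)$ misses the vertices of $X$. Consequently, for any reduced non-trivial word $w\in F_2=\pi_1(\overline\Gamma)$, the associated non-backtracking loop in $\overline\Gamma$ lifts under $f$ to a non-trivial local geodesic path in $X$ from $a_0$ to $w\cdot a_0$; if $w=1$ in $G$ this path closes up in $X$ as a periodic local geodesic, producing a finite sequence $a_0,b_1,\dots,b_{m-1},b_m=a_0$ of the form forbidden by Definition~\ref{def:build-like}(v). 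Hence $F_2$ embeds in $G$. The principal obstacle is the dumbbell-assembly step --- producing, from Poincar\'e recurrence alone, two combinatorially distinct closed local geodesic loops whose incidence data at $e$ can be matched to form a locally isometric dumbbell --- and this is where thickness, (ii), and (iv) combine essentially, jointly guaranteeing the three mutually perpendicular directions in $A$ at a single junction edge needed for local isometry at each trivalent vertex.
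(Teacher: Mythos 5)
The proposal follows the paper's strategy closely — Markov measure on direction sequences (the paper's Remark~2.6), Poincar\'e recurrence \`a la Ballmann--Brin, a dumbbell graph mapping into $X$, and Definition~\ref{def:build-like}(v) to certify injectivity of $F_2\to G$. The high-level plan is correct, and the measure-theoretic setup and the use of condition~(iv) to find three perpendicular $A$-directions at a point $x$ in a degree-$\geq 3$ edge match the paper.

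However, the dumbbell-assembly step — which you yourself flag as ``the principal obstacle'' — is exactly where the argument has a genuine gap, and it is not resolved. Poincar\'e recurrence around $a_0$ gives a local geodesic loop in $X/G$ returning to the orbit of $a_0$, but the two directions it contributes at the trivalent vertex (the outgoing direction $a_0$ and the incoming direction, some $b$ with $a_0\in H(b)$) are not under your control beyond being in two distinct triangles; in particular there is no guarantee that the ``bridge'' you then attach leaves through a third triangle, nor that at the \emph{other} trivalent vertex the bridge's arriving direction and the second loop's two directions lie in three distinct triangles. The paper resolves this by isolating Lemma~\ref{lem:Poincare}: given $a\in A$ and $b\in H(a)$, there is a local geodesic starting with direction $b$ and ending with direction $ga$. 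This is then applied three times with carefully chosen inputs — $(v_1,v_2)$, $(v_1,v_3)$, and crucially $(v_2,g''v_3)$, where $g''$ is the group element produced by the second application — so that at each trivalent vertex the three directions are (a $G$-translate of) $\{v_1,v_2,v_3\}$, pairwise in distinct triangles. Your two independent loops plus a bridge do not automatically admit such a compatible gluing, and you do not produce the homomorphism $\varphi\colon F_2\to G$ explicitly, which the paper defines via the group elements $g,g',g''$ and an equivariant extension of $f$ over the fundamental domain $C\cup C'\cup C''$.

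A smaller issue: your injectivity argument asserts that a reduced $w\in F_2$ with $\varphi(w)=1$ ``produces a finite sequence $a_0,b_1,\dots,b_m=a_0$'' forbidden by (v). As stated, $b_m$ is the last direction of the path, which need not equal $a_0$; what one actually shows is that after reducing $w$ cyclically and using that the three edge-directions at a trivalent vertex lie in distinct triangles, one can append one more term to obtain a sequence with $a_0=a_{n+1}$ (or use the paper's cleaner route of proving injectivity of $f_*$ on directed edges of $\Gamma$, from which injectivity of $\varphi$ follows since $\varphi(h)=1$, $h\neq 1$, would force $f_*(h\cdot cu)=f_*(cu)$ for distinct edges). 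So the idea is right, but the precise bookkeeping that makes (v) applicable is missing.
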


To prove Lemma~\ref{lem:thick} we will use the following method of \cite{BallBrin1995}.

\begin{definition} Suppose that an {\kol essential} triangle complex $X$ has all edges of finite degree, and is recurrent with $A/G$ finite.
Consider the Markov chain with states $A$ and the following transition function.
Let $b\in A\cap \lk_xT$. The transition probability $p(a,b)$ from $a\in A$ to $b$ equals $\frac{1}{\deg x-1}$ if $b\in H(I(a))$ and $0$ otherwise.
\end{definition}

\begin{rem}
\label{rm:measure}
A uniform measure $\mu$ on $A$ is stationary for that Markov chain.
Thus the space $A^\Z$ can be equipped with Markov measure $\mu^*$ invariant under the shift (see e.g.\ \cite[Ex~(8), page~21]{Wal}). Since $A/G$ is finite, the measure of the quotient $A^\Z/G$  by the diagonal action of $G$ is finite, w.l.o.g.\ a probability measure. Note that the shift map descends to $A^\Z/G$ and is still measure preserving.
\end{rem}

\begin{lemma}
\label{lem:Poincare} Suppose that an {\kol essential} triangle complex $X$ has all edges of finite degree, and is recurrent with $A/G$ finite.
Let $a\in A$ and $b\in H(a)$. Then there is a local geodesic $f\colon [0,l]\to X$ with the directions at $0,l$ mapping to $b,ga$ for some $g\in G$.
\end{lemma}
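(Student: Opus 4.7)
The plan is to apply Poincar\'e recurrence to the shift $S$ acting on $(A^{\Z}/G, \mu^*)$, which by Remark~\ref{rm:measure} is a measure-preserving transformation of a finite measure space.

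First, I would set up the cylinder $W \subset A^{\Z}$ consisting of Markov sequences $(a_i)_{i \in \Z}$ with $a_0 = I(a)$ and $a_1 = b$. The Markov transition $a_0 \to a_1$ requires $a_1 \in H(I(a_0))$, and since $I \circ I = \mathrm{id}$ this is exactly $b \in H(a)$, which is the hypothesis. So $W$ has positive $\mu^*$-measure, equal to $\mu(\{I(a)\}) \cdot (\deg x - 1)^{-1}$, where $x$ is the basepoint of $a$. Hence its image $\tilde W \subset A^{\Z}/G$ is also of positive measure.

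Next, by Poincar\'e recurrence, almost every $\omega \in \tilde W$ satisfies $S^n \omega \in \tilde W$ for some $n \geq 1$. Choosing such an $\omega$ and lifting to a representative $(a_i)_{i\in\Z} \in W$, the return condition $S^n \omega \in \tilde W$ furnishes an element $g \in G$ with $g a_n = I(a)$ and $g a_{n+1} = b$.

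Finally, I would extract the desired local geodesic from the finite subsequence $a_1, a_2, \ldots, a_n$. Concatenating the $n$ Euclidean segments prescribed by these directions yields a local geodesic $f \colon [0, l] \to X$ of positive length, since the Markov conditions $a_{i+1} \in H(I(a_i))$ are precisely what makes consecutive segments fit into a straight geodesic across each intermediate edge. Its starting direction at $0$ is $a_1 = b$, and its ending direction at $l$ is $I(a_n) = I(g^{-1} I(a)) = g^{-1} a$, using that the $G$-action commutes with $I$. Setting $g' := g^{-1} \in G$ completes the proof. The only subtle point is the choice of cylinder: by packing both $a$ and $b$ into two consecutive coordinates of a single cylinder, one application of Poincar\'e recurrence controls both endpoint directions simultaneously, avoiding the need for any ergodicity or mixing to link separate $a$- and $b$-cylinders across an iterate of the shift.
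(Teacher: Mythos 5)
Your proof is correct and takes essentially the same route as the paper: apply Poincar\'e recurrence to the shift on $A^\Z/G$ using the positive-measure cylinder determined by $a_0=I(a),\,a_1=b$ (which is exactly where the paper uses $p(I(a),b)\neq 0$), extract a return at time $n$ with translating element $g$, and concatenate the $n$ geodesic segments to produce $f$. The only cosmetic difference is that the paper phrases the conclusion as $a_n=ga_0$ while you write $ga_n=I(a)$, which is just the inverse choice of $g$.
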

\begin{proof}
We have $p(I(a),b)\neq 0$. Thus by the Poincar\'e recurrence (see e.g. \cite[Thm~1.4]{Wal}) applied to $A^\Z/G$ we have a finite sequence $a_0=I(a),a_1=b,\ldots, a_n=ga_0$, for some $g\in G$, such that for all $0\leq i<n$ we have $a_{i+1}\in H(I(a_i))$. Define $f$ as the concatenation of the geodesics from $a_i$ to $I(a_i)$ for $1\leq i<n$ and from $a_n=ga_0$ to $gI(a_0)=ga$.
\end{proof}

\begin{rem}
\label{rm:Markov}
\kol{An alternative, more combinatorial way of proving Lemma~\ref{lem:Poincare} was suggested to us by Sam Shepherd. For example, if $G$ acts freely on~$X$, then the finite set $A/G$ is the set of states for appropriate Markov chain for which the uniform measure is stationary. Consequently, since we have a positive transition probability from the state $[I(a)]$ to the state $[b]$, we also have a positive probability of passing from $[b]$ to $[I(a)]$ after several steps.}
\end{rem}

\begin{proof}[Proof of Lemma~\ref{lem:thick}]
Let $\overline{\Gamma}$ (see Figure~\ref{f:0} left) be the graph obtained from joining the basepoints of two closed paths $\overline C,\overline C'$ by a path $\overline C''$ (their lengths will be determined later). Let $\Gamma$ be the universal cover of $\overline{\Gamma}$ with the action of the deck transformation group $F_2$. The main idea of the proof is to construct a homomorphism $\varphi\colon F_2\to G$ and $\varphi$-equivariant local isometry $\Gamma\to X-X^0$ that is injective on the set of directions at the vertices of $\Gamma$.

Let $C''$ be a lift to $\Gamma$ of $\overline C''$ with endpoints $c, c'$. Let $C,C'$ be some lifts of the paths $\overline C,\overline C'$ starting at $c, c'$. Let $h,h'\in F_2$ be the elements mapping $c$ to the other endpoint of $C$, and $c'$ to the other endpoint of $C'$, respectively. Observe that $C\cup C'\cup C''$ is a fundamental domain for the action of $F_2$ on~$\Gamma$. Thus to define an equivariant map $\Gamma \to X-X^0$ it suffices to define a homomorphism $\varphi \colon F_2 \to G$ and a map $f\colon C\cup C'\cup C''\to X-X^0$ with the property that $\varphi (h)$ maps $f(c)$ to the other endpoint of $f(C)$ and $\varphi(h')$ maps $f(c')$ to the other endpoint of $f(C')$.

Let $e$ be an edge of $X$ of degree $\geq 3$. Then for $i=1,2,3,$ there are distinct triangles $T_i$ containing $e$. Since $Y$ is recurrent, by Definition~\ref{def:build-like}(iv) we have $x\in e$ such that for any $T$ containing $e$ the direction in $\lk_x T$ perpendicular to $e$ belongs to $A$.
Let $v_i$ be that direction in $\lk_x T_i$.

Apply Lemma~\ref{lem:Poincare} to $a=v_1,b=v_2$, to obtain a local geodesic $f\colon [0,l]\to X-X^0$ with ending directions $v_2,gv_1$, for some $g\in G$. Identify $C$ with~$[0,l]$. Analogously, apply Lemma~\ref{lem:Poincare} to $a=v_1, b=v_3$ to obtain $f\colon C''\to X-X^0$ with ending directions $v_3,g''v_1,$ for some $g''\in G$. Finally, apply Lemma~\ref{lem:Poincare} to $a=v_2,b=g''v_3$, to obtain $f\colon C'\to X-X^0$ with ending directions $g''v_3,g'g''v_2,$ for some $g'\in G$. Define $\varphi \colon F_2\to G$ by $\varphi(h)=g$ and $\varphi(h')=g'$. By the observation above, we can extend $f\colon C\cup C'\cup C''\to X-X^0$ to a $\varphi$-equivariant map $\Gamma\to X-X^0$ for which we keep the same notation $f$.
Note that for each vertex $w$ of $\Gamma$ the three directions at $w$ are mapped under~$f$ to a $G$-translate of the triple $\{v_1,v_2,v_3\}$.

Let $E$ be the set of directed edges of $\Gamma$. Consider the map $f_*\colon E\to A$ that maps each directed edge $wu\in E$ to the direction of $f(wu)$ at $f(w)$. We will prove that $\varphi$ is injective by showing that $f_*$ is injective. Suppose that there are two edges $wu,w'u'\in E$ with $f_*(wu)=f_*(w'u')$. Then we also have $f_*(uw)=f_*(u'w')$, so without loss of generality we can assume that the embedded edge-path $\gamma$ in $\Gamma$ from $w$ to $w'$ passes through $u$. For $i=0,\ldots, n$, let $a_i$ be the images under $f_*$ of consecutive edges of $\gamma$. In particular, $a_0=f_*(wu)$ and in the case where $u'$ lies on $\gamma$ we have $a_n=f_*(u'w')$. If $\gamma$ does not contain $u'$, then we add $a_{n+1}=f_*(w'u')$. Note that for $i=0,1,\ldots$ we have $a_{i+1}\in H(I(a_i))$, and so
$f_*(wu)=f_*(w'u')$ contradicts Definition~\ref{def:build-like}(v).
\end{proof}

\section{Invariant cocompact subcomplexes}
\label{s:inv}

\begin{definition}
\label{d:subcplx}
Let $X$ be a simplicial complex with a simplicial action of a group~$G$. We say that a subcomplex $Z\subseteq X$ is
an \emph{\ics \ with respect to $G$} (shortly \emph{$G$-\sics}) if $Z$ is $G$-invariant, and the quotient $Z/G$
is compact. Note that a $G$-\sics\ is not required to be connected.
\end{definition}

A simplicial complex homeomorphic to the plane $\mathbb{R}^2$ (resp.\ to the $2$-sphere~$S^2$) is called
a \emph{simplicial plane} (resp.\ \emph{simplicial $2$-sphere}). A simplicial plane whose $1$-skeleton is Gromov-hyperbolic (w.r.t.\ to the metric where each edge has length $1$) is called \emph{hyperbolic}. By the classification of $2$-dimensional orbifolds, if $E$ is a non-hyperbolic simplicial plane with cocompact automorphism group $H$, then $H$ is virtually $\Z^2$. We call such $E$ a \emph{flat}.

\begin{lemma}
\label{l:noics}
Let $X$ be a simply connected $2$-dimensional simplicial complex with a finitely generated group $G$ acting {\kol almost freely} and without inversions. If each essential {$G$-\sics}\ in $X$ is a disjoint union of flats, then $G$ is virtually a free product of some number of $\Z$ and $\Z^2$.
\end{lemma}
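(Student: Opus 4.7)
The plan is to realise $G$ as the fundamental group of a finite graph of groups over finite edge groups whose vertex groups are virtually $\mathbb{Z}^2$ or virtually free, and then to pass to a torsion-free subgroup of finite index to conclude via the Kurosh subgroup theorem.

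Since $G$ is finitely generated, I would fix a nested exhaustion $Y_1 \subseteq Y_2 \subseteq \cdots$ of $X$ by $G$-invariant, $G$-cocompact, connected subcomplexes with $\bigcup_i Y_i = X$. For each $i$, iteratively and $G$-equivariantly collapse pairs $(e,T)$ consisting of an edge $e$ lying in a unique triangle $T$; this yields a $G$-equivariant strong deformation retract $\mathcal{Y}_i = Y_i^e \cup \Gamma_i \subseteq Y_i$, where $Y_i^e$ is the essential $2$-dimensional part (every edge of degree at least $2$) and $\Gamma_i$ is the residual $1$-dimensional part (edges in no triangle). The hypothesis of the lemma, applied to the essential $G$-invariant $G$-cocompact subcomplex $Y_i^e$, gives that $Y_i^e$ is a disjoint union of flats. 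Each flat's $G$-stabiliser acts cocompactly on a simplicial plane, hence is virtually $\mathbb{Z}^2$ by the classification of compact Euclidean $2$-orbifolds; stabilisers of cells in $\Gamma_i$ are finite by properness and the bound $M$ on orders of finite subgroups.

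Letting $G_i := \pi_1^{\mathrm{orb}}(\mathcal{Y}_i/G)$, Bass--Serre theory applied to the graph-of-spaces decomposition of $\mathcal{Y}_i$ (flats attached to graph components at isolated vertices with finite stabilisers) exhibits $G_i$ as the fundamental group of a finite graph of groups with finite edge groups and vertex groups either virtually $\mathbb{Z}^2$ (from flats) or virtually free (stabilisers of graph components, acting cocompactly on graphs). The natural map $G_i \twoheadrightarrow G$ is surjective with kernel $\pi_1(\mathcal{Y}_i)$; since $X$ is simply connected and $G$ is finitely generated, any loop in $\mathcal{Y}_i$ bounds a disc contained in some $\mathcal{Y}_j$ with $j \geq i$, so $G = \varinjlim_i G_i$. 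By Linnell's accessibility theorem, the finitely generated group $G$ admits graph-of-groups decompositions over finite subgroups whose complexity is uniformly bounded in terms of $\mathrm{rank}(G)$ and $M$. Combined with Hopfianness of the $G_i$ (they are residually finite by a Cohen--Hempel-style theorem on graphs of residually finite groups with finite edge groups, hence Hopfian as they are finitely generated), this uniform bound forces the surjections $G_i \twoheadrightarrow G$ to be isomorphisms for $i$ sufficiently large.

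Therefore $G$ itself carries the asserted finite graph-of-groups structure. To finish, I would pass to a torsion-free subgroup $G' \leq G$ of finite index --- which exists by residual finiteness of $G$ together with the bound on orders of finite subgroups --- and apply the Kurosh subgroup theorem to the induced decomposition of $G'$. In this induced decomposition the edge groups are trivial and the vertex groups are $\mathbb{Z}^2$ or free, so $G'$ is a free product of copies of $\mathbb{Z}$ and $\mathbb{Z}^2$, as required. The hard part will be the direct-limit stabilisation: the transition maps $G_i \to G_{i+1}$ are a priori neither surjective nor injective, so combining Linnell's bound with Hopfianness rigorously to force eventual isomorphism --- and in particular ensuring that the vertex group types persist under the surjection $G_i \twoheadrightarrow G$ --- is where the most delicate work lies.
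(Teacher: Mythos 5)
Your overall strategy matches the paper's: exhaust $X$ by connected $G$-cocompact subcomplexes, collapse to graph-plus-flats, read off a finite graph-of-groups decomposition of the lifted groups $G_i$ with finite edge groups and vertex groups of bounded isomorphism type, establish $G=\varinjlim G_i$, and then try to force stabilisation via a Linnell-type accessibility bound together with Hopfianness. The difference is a genuine gap at exactly the place you flag yourself, and the paper closes it with a specific device your sketch does not supply.

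The problem is that Linnell's bound on the number of edges in a minimal splitting is stated in terms of the number of generators of the group being split. Each $G_i$ is finitely generated, but there is no reason its minimal number of generators should be bounded independently of $i$; so applying Linnell to the $G_i$ (or, as you write it, to $G$ itself --- which at this stage is not yet known to split over finite subgroups) does not yield a uniform bound on complexity, and hence does not yield finitely many isomorphism types among the $G_i$. Without that, the Hopfian step has nothing to grab onto, and the transition maps $G_i\to G_{i+1}$ indeed need not be surjective, as you note. The paper's fix: since $G$ is finitely generated, pick a finitely generated subgroup $H_1<G_1$ such that $H_1\to G$ is already onto, and let $H_i$ be the image of $H_1$ under $G_1\to G_i$. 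This gives a chain of genuine epimorphisms $H_1\twoheadrightarrow H_2\twoheadrightarrow\cdots$ with $\varinjlim H_i=G$, all $H_i$ generated by the same number $d$ of elements. Now each $H_i$, being a subgroup of $G_i$, also splits as a finite graph of groups with finite edge groups and with vertex groups among the finitely many subgroup types of the $G_v$; Linnell (applied with the fixed generator bound $d$) uniformly bounds the number of edges, so only finitely many isomorphism types occur among the $H_i$, and one of them, $H$, recurs infinitely often. Since $H$ is residually finite and finitely generated, it is Hopfian, so the chain of epimorphisms stabilises and $G\cong H$. Your Kurosh passage at the end is a fine (slightly more explicit) way to finish once this is in place; the missing step is precisely the replacement of $G_i$ by $H_i$.
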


\begin{proof}
Suppose first that $X$ is countable.
We may find an increasing sequence $X_1\subset X_2 \subset \cdots$ of connected {$G$-\sics}'s exhausting $X$. (Start with a $G$-orbit of a vertex, connect it equivariantly by edge-paths, then at each step add equivariantly remaining cells.) The action of $G$ on $X_i$ lifts to an action of a group $G_i$ on the universal cover $\widetilde X_i$ of $X_i$. The corresponding maps $\widetilde X_1\to \widetilde X_2 \to \cdots$ and $\widetilde X_i\to \widetilde X=X$ induce homomorphisms $G_1\to G_2\to\cdots$ and epimorphisms $G_i\twoheadrightarrow G$. Note that the vertex stabilisers of the action of $G_i$ on $\widetilde X_i$ coincide with the vertex stabilisers of the action of $G$ on $X_i$ and thus have uniformly bounded order.

Since $G$ is finitely generated, there is a finitely generated subgroup $H_1<G_1$ such that $H_1\to G$ is an epimorphism.
For each $i>1$, let $H_i$ be the image of $H_1$ under the the homomorphism $G_1 \to G_i$. We obtain an infinite sequence of epimorphisms
	\begin{align}\label{f:1}
	H_1 \twoheadrightarrow H_2 \twoheadrightarrow \cdots
	\end{align}
The epimorphism from the direct limit $\varinjlim H_i$ to $G$ is in fact an isomorphism. Indeed, let $h_i\in\ker (H_i\to G)$ and let $\alpha$ be a path
joining a basepoint $\widetilde x_i\in \widetilde X_i$ to $h_i\widetilde x_i$. The projection of $\alpha$ to $X_i$ is a closed path and it becomes contractible in some $X_j$, since $X_j$ exhaust $X$, and $X$ is simply connected. Consequently the image $h_j\in H_j$ of $h_i$ fixes the image of $\widetilde x_i$ in $\widetilde X_j$ and thus $h_j\in\ker (H_j\to G)$ implies $h_j=0$.

Since each essential {$G$-\sics}\ in $X$ is a disjoint union of flats and $G$ acts without inversions, every $X_i$ can be equivariantly collapsed (by removing triangles with free edges) to a space $Y_i$ that is a union of a graph and a disjoint union of flats. The preimage $\widetilde Y_i \subset \widetilde X_i$ of $Y_i$ under the covering map is thus a simply connected union of a graph and a disjoint union of flats, with a proper and cocompact action of~$G_i$. Let $\Gamma_i$ be the tree obtained from $\widetilde Y_i$ by quotienting each flat to a vertex. The quotient $\Gamma_i/G_i$ is a finite graph of groups $\mathcal G_i$ with $\pi_1\mathcal G_i=G_i$ and edge groups of uniformly bounded order. Its vertex groups $G_v$ are also finite of uniformly bounded order, or have the following description for a vertex $v$ obtained from quotienting a flat $Z$ to~$v$.
Namely, let $G'_v$ be the image of $G_v$ in the isometry group of~$Z$. We then have a short exact sequence $0\to K\to G_v\to G'_v\to 0$, with $K$ finite of uniformly bounded order. By the classification of $2$-dimensional Euclidean orbifolds, there are only finitely many possible isomorphism types for $G'_v$. Consequently, there are only finitely many possible isomorphism types for~$G_v$. Analogously, there are only finitely many possible isomorphism types for the subgroups of $G_v$.

If $H_1$ is generated by $d$ elements, then so is each $H_i$ for $i>1$. Since each $H_i$ is a subgroup of $G_i$, it is also the fundamental group of a finite graph of groups $\mathcal H_i$ with edge groups of uniformly bounded order. It follows, by a result of Linnell \cite[Thm 2]{Linnel1983}, that there is a uniform bound on the number of edges in a minimal such graph with fundamental group~$H_i$. (This is because the augmentation ideal in \cite[Thm 2]{Linnel1983} is generated by at most $d$ elements.) Edge and vertex groups of $\mathcal H_i$ have only finitely many isomorphism types. Furthermore, there are finitely many possible injections from edge groups to vertex groups, up to conjugations in vertex groups. However, such conjugations do not change $\pi_1 \mathcal H_i$. Hence there are only finitely many isomorphism types in $\{ H_i \}_{i=1}^{\infty}$.

Let $H$ be isomorphic to $H_i$ for infinitely many $i$. Note that $H$ is virtually a free product of some number of $\Z$ and $\Z^2$,
and thus it is residually finite. Moreover, $H$ is finitely generated, so it is Hopfian. It follows that if $H\cong H_i,H_{i+k}$, then $H_i\twoheadrightarrow H_{i+k}$ is an isomorphism, and hence, for every $j=i,i+1,\ldots, i+k-1$, the map $H_j\twoheadrightarrow H_{j+1}$ is an isomorphism. Therefore, the sequence~(\ref{f:1}) stabilises and $G=H$ is as required.

If $X$ is not countable, we consider the collection $X_\lambda$ of all connected {$G$-\sics} containing a given connected {$G$-\sics}\ $X_1$, which form a directed set under inclusion. We define appropriate $G_\lambda, H_\lambda$ as before, and we have again $G=\varinjlim H_\lambda$. There is still $H$ such that for every $X_\lambda$ there is $X_{\lambda'}\supset X_\lambda$ with $H_{\lambda'}\cong H$, and then replacing $X_1$ by such $X_{\lambda'}$ we obtain that all maps in our directed system are isomorphisms and consequently $G=H$.
\end{proof}

\begin{lemma}
\label{lem:injective}
Let $Z\subseteq X$ be a connected essential subcomplex in a connected simplicial complex $X$.
If $\pi_1Z\to \pi_1 X$ is not injective, then $Z$ is contained in a thick subcomplex $Z'\subseteq X$ with $Z'-Z$ finite.
\end{lemma}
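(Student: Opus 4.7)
The plan is to fill a loop $\gamma \subset Z$ witnessing the failure of injectivity of $\pi_1 Z \to \pi_1 X$ by a simplicial disc diagram in $X$ and use its image to enlarge $Z$. More precisely, I would first choose an edge-loop $\gamma \subset Z$ representing a nontrivial element of $\ker(\pi_1 Z \to \pi_1 X)$, and then pick a simplicial map $f \colon D \to X$ from a triangulated $2$-disc $D$ with $\partial D$ mapping to $\gamma$, having the minimum possible number of $2$-simplices. Finally I set $Z' := Z \cup f(D)$; since $D$ is finite, $Z' - Z = f(D) - Z$ is automatically finite.

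For essentiality of $Z'$, every edge of $Z$ keeps degree $\geq 2$ in $Z' \supseteq Z$ by essentiality of $Z$. For an edge $e \subset f(D) - Z$, any preimage $e^* \in D$ satisfies $f(e^*) = e \notin Z$; since $\partial D$ maps into $\gamma \subset Z$, such $e^*$ must be interior to $D$, hence shared by two $2$-simplices $\sigma_1, \sigma_2 \in D$. Minimality of $D$ forces $f(\sigma_1) \neq f(\sigma_2)$, since otherwise folding $\sigma_1$ onto $\sigma_2$ would yield a disc diagram for $\gamma$ with two fewer $2$-simplices. Hence $e$ has degree $\geq 2$ in $f(D) \subset Z'$. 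No component of $Z'$ is a single vertex, since $f(D)$ is connected and meets $Z$ along $\gamma$.

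For thickness, note that $f(D) \not\subseteq Z$, for otherwise $\gamma$ would be nullhomotopic in $Z$. I would then partition the $2$-simplices of $D$ according to whether their image lies in $Z$. If both classes are nonempty, then since the dual graph of the triangulated disc $D$ (vertices $=$ $2$-simplices, edges $=$ shared edges) is connected, some interior edge $e^* \in D$ is shared by $\sigma$ with $f(\sigma) \subset Z$ and $\sigma'$ with $f(\sigma') \not\subset Z$; the edge $e := f(e^*)$ then lies in $Z$, has degree $\geq 2$ in $Z$ by essentiality, and $f(\sigma') \in Z' - Z$ is an additional triangle incident to $e$, raising its degree in $Z'$ to $\geq 3$. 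Otherwise no $2$-simplex of $D$ maps into $Z$; then any boundary edge $e^* \in \partial D$ has its unique containing $2$-simplex $\sigma'$ with $f(\sigma') \not\subset Z$, and the same counting applied to $e = f(e^*) \in \gamma \subset Z$ yields degree $\geq 3$ in $Z'$.

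The step I expect to be most delicate is the reducedness claim behind the essentiality verification: namely, that minimizing the number of $2$-simplices of $D$ precludes adjacent $2$-simplices with identical image in $X$, and that the associated fold really preserves the planar simply-connected structure of the disc diagram while strictly reducing area. This is classical van Kampen-style machinery, but must be spelled out carefully in the purely simplicial setting.
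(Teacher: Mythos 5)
Your strategy matches the paper's: fill a loop witnessing non-injectivity by a minimal disc diagram $f\colon D\to X$, set $Z'=Z\cup f(D)$, use reducedness (extracted from minimality) for essentiality, and use minimality again to produce an edge of degree $\geq 3$. The one genuine divergence is in what is being minimized. You fix the loop $\gamma$ and take the minimal diagram with boundary $\gamma$, whereas the paper takes a minimal-area \emph{reduced} disc diagram whose boundary path is \emph{any} loop in $Z$ representing a nontrivial element of $\pi_1(Z)$. With the paper's wider minimization, if a boundary triangle $T$ satisfied $\varphi(T)\subset Z$, one could delete $T$ to obtain a smaller reduced diagram whose (new) boundary is still in $Z$ and still represents a nontrivial element of $\pi_1(Z)$; hence every boundary triangle maps outside $Z$, and a single boundary triangle adjacent to a boundary edge immediately gives the thick edge. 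Your fixed-$\gamma$ minimization does not force boundary triangles to map outside $Z$, and that is exactly why you need the two-case partition.

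That two-case argument has a gap. In Case~1 you invoke connectedness of the dual graph of $D$, which holds when $D$ is a PL $2$-manifold with boundary, i.e.\ a genuine triangulated disc. But the relevant $\gamma$, a nontrivial element of $\ker(\pi_1 Z\to\pi_1 X)$, need not be representable by an embedded loop in $Z$, and the fold you use to enforce reducedness need not preserve the manifold structure of $D$; so in general $D$ is only a disc diagram in the paper's sense (a compact contractible complex embedded in $\R^2$), which can have cut vertices across which no gallery passes, and then the dual graph is disconnected. Two repairs are available: (a) adopt the paper's boundary-varying minimization together with Remark~\ref{rem:discs_exist}, which produces a reduced disc diagram for any contractible closed edge-path, and then use the one-case boundary-triangle argument; or (b) keep your minimization but replace dual-graph connectedness by a gallery walk from any triangle whose image is not contained in $Z$: since $D$ is planar and contractible, the walk must reach a boundary edge of $D$ or cross an interior edge into a triangle whose image lies in $Z$, and either event produces the thick edge.
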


Before we give the proof, we record the following consequences.

\begin{cor}
\label{cor:injective}
Let $Z\subseteq X$ be a connected essential {$G$-\sics}\ in a connected simplicial complex $X$. If $\pi_1Z\to \pi_1 X$ is not injective, then $Z$ is contained
in a thick {$G$-\sics}
\end{cor}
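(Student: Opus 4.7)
The plan is to apply Lemma~\ref{lem:injective} to obtain a thick subcomplex $Z'\subseteq X$ with $Z\subseteq Z'$ and $Z'-Z$ finite, and then to make this $G$-equivariant by taking the $G$-orbit,
\[ Z'' := \bigcup_{g\in G} gZ'. \]
I claim $Z''$ is a thick \ics\ ($G$-\sics) containing $Z$, which finishes the proof.

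The subcomplex $Z''$ is $G$-invariant by construction and contains $Z'$, hence contains the edge of degree $\geq 3$ responsible for the thickness of $Z'$. To verify cocompactness, note that $Z$ is itself $G$-invariant, so
\[ Z''= Z\,\cup\,\bigcup_{g\in G} g(Z'-Z), \]
and therefore $Z''/G$ is the union of $Z/G$ with the images of the finitely many cells of $Z'-Z$. Since $Z/G$ is compact by hypothesis and finitely many cells are added, $Z''/G$ is compact as well.

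It remains to check that $Z''$ is essential. Every edge of $Z''$ lies in some $gZ'$; since $gZ'$ is essential, the edge has degree $\geq 2$ in $gZ'$, and hence in the larger $Z''$. Similarly, if $C$ were a connected component of $Z''$ consisting of a single vertex $v$, then picking any $gZ'$ containing $v$, the component of $gZ'$ through $v$ would be contained in $C=\{v\}$, contradicting essentiality of $gZ'$. Thus $Z''$ is essential and, being equipped with an edge of degree $\geq 3$, is thick.

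I do not foresee a real obstacle: Lemma~\ref{lem:injective} does the structural work of thickening, and the only subtle point is that the finiteness of the \emph{difference} $Z'-Z$ (rather than of $Z'$ itself, which need not be compact) is precisely what ensures that the $G$-saturation $Z''$ remains cocompact modulo $G$.
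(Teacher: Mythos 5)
Your proof is correct and takes the same approach as the paper: apply Lemma~\ref{lem:injective} to get $Z'$, then saturate to $GZ'$. The paper's proof is just the one-line version ``Apply Lemma~\ref{lem:injective} to $Z$. Then $GZ'$ is a thick $G$-\sics'' --- you have spelled out the routine verifications (invariance, cocompactness via $GZ'=Z\cup G(Z'-Z)$, essentiality, thickness) that the paper leaves implicit.
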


\begin{proof}
Apply Lemma~\ref{lem:injective} to $Z$. Then $GZ'$ is a thick {$G$-\sics}
\end{proof}

For a subcomplex $Z\subseteq X$ and a triangle $T$ of $Z$ let {\gal} denote the \emph{gallery connected component} of $Z$
containing $T$. That is, {\gal} is the subcomplex of $Z$ consisting of $T$ and
all the triangles in $Z$ that can be reached from $T$ by passing from a triangle to a triangle adjacent
along an edge.

\begin{cor}
\label{cor:essform}
Let $X$ be a simply connected simplicial complex that does not contain simplicial $2$-spheres.
Let $Z\subseteq X$ be an essential {$G$-\sics}\ that is not contained in a thick {$G$-\sics} Then for each triangle $T$ of $Z$, {\gal} is a simplicial plane.
\end{cor}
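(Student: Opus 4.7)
The plan is to show that $Z_0 := \mathrm{gal}_Z(T)$ is a connected, simply connected $2$-manifold without boundary; the surface classification together with the hypothesis on $X$ then force $Z_0 \cong \mathbb{R}^2$. The first observation is that every edge of $Z$ has degree exactly~$2$: taking $Z$ itself as a candidate, the hypothesis forces $Z$ to be non-thick, so every edge of $Z$ has degree at most~$2$, and essentiality gives exactly~$2$. Since $Z_0$ is closed under the gallery relation, the same degree condition holds in $Z_0$; in particular $Z_0$ has no boundary edges, and the link $\lk_v Z_0$ of every vertex $v$ is a $1$-complex in which every vertex has degree~$2$, hence a disjoint union of circles.

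To establish $\pi_1 Z_0 = 0$, I would invoke Lemma~\ref{lem:injective} applied to $Z_0 \subseteq X$. If $\pi_1 Z_0 \to \pi_1 X$ were not injective, the lemma would produce a thick subcomplex $Z_0' \supseteq Z_0$ with $Z_0' - Z_0$ finite. Then $Z' := Z \cup G(Z_0' - Z_0)$ is $G$-invariant with compact quotient (the finite set $Z_0' - Z_0$ contributes only finitely many $G$-orbits of simplices) and inherits thickness from $Z_0'$. Hence $Z'$ is a thick $G$-\sics\ containing~$Z$, contradicting the hypothesis. Since $X$ is simply connected, this forces $\pi_1 Z_0 = 0$.

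The core step is to show that each $\lk_v Z_0$ is actually a single circle. Suppose for contradiction that $\lk_v Z_0 = L_1 \sqcup \cdots \sqcup L_k$ with $k \geq 2$. I would split $v$ into $k$ vertices $v_1, \ldots, v_k$, with each $v_i$ inheriting the triangles and edges at $v$ whose link pieces lie in $L_i$, forming a new complex $\widetilde Z_0$. Because gallery adjacency only uses shared edges, $\widetilde Z_0$ remains gallery connected and hence topologically connected. The natural quotient $\widetilde Z_0 \to Z_0$ identifies the $k$ points $v_1, \ldots, v_k$, and a standard argument (attaching $k-1$ arcs from $v_1$ to the $v_i$ inside $\widetilde Z_0$ and then collapsing them) yields $\pi_1 Z_0 \cong \pi_1 \widetilde Z_0 * F_{k-1}$. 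Combined with $\pi_1 Z_0 = 0$, this forces $k = 1$.

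With each link a single circle and each edge in exactly two triangles, $Z_0$ is a boundaryless, connected, simply connected surface, hence homeomorphic to $\mathbb{R}^2$ or to $S^2$ by the classification of surfaces. The latter is excluded by the hypothesis that $X$ contains no simplicial $2$-spheres, so $Z_0$ is a simplicial plane. I expect the main technical subtlety to lie in the vertex-splitting step, both in making the reassignment of incident simplices to the $v_i$ precise and in rigorously deriving the free-product decomposition of $\pi_1 Z_0$.
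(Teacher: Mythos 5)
Your proof is correct and takes essentially the same route as the paper: degree-$2$ edges from non-thickness, simple connectivity of $\mathrm{gal}_Z(T)$ via Lemma~\ref{lem:injective}, and then the classification of simply connected surfaces (with $S^2$ excluded by hypothesis). You make explicit two steps the paper compresses: since $\mathrm{gal}_Z(T)$ need not be $G$-invariant, Corollary~\ref{cor:injective} cannot be quoted verbatim and one must saturate to $Z\cup G(Z_0'-Z_0)$ as you do; and the vertex-splitting argument is the precise reason why a simply connected $2$-pseudomanifold is an honest surface, which the paper absorbs into the phrase ``a connected surface with possibly some identifications on a discrete set of points.''
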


\begin{proof} Since $Z$ is essential and not thick, {\gal} is a 2-dimensional pseudomanifold. Thus {\gal} is homeomorphic with a connected surface with possibly some identifications on a discrete set of points. By Corollary~\ref{cor:injective}, {\gal} is simply connected, so it is homeomorphic with $S^2$ or $\R^2$.
\end{proof}

We now pass to the proof of Lemma~\ref{lem:injective}. A \emph{disc diagram} $D$ is a compact contractible simplicial complex with a fixed embedding in $\R^2$. Its \emph{boundary path} is the attaching map of
the cell at $\infty$. If $X$ is a simplicial complex, \emph{a disc diagram in $X$} is a nondegenerate simplicial map $\varphi \colon D\to X$, and its
\emph{boundary path} is the composition of the boundary path of $D$ and $\varphi$. We say that $\varphi$ is \emph{reduced} if it maps triangles sharing an edge to two distinct triangles. The \emph{area} of $\varphi$ is the number of triangles of $D$.

\begin{rem}
\label{rem:discs_exist}
Let $\gamma$ be a closed edge-path in a simplicial complex $X$. If $\gamma$ is contractible in $X$, then there is a reduced disc diagram in $X$ with boundary path $\gamma$. For $\gamma$ embedded, this is \cite[Lem 1.6]{JanSwi2006}. For $\gamma$ not embedded, attach a triangulated annulus $A$ to $X$ along $\gamma$ to obtain~$X'$. Applying the embedded case to the second boundary component $\gamma'$ of~$A$ we obtain a reduced disc diagram $\varphi' \colon D'\to X'$. Then $\varphi'^{-1}(X'-X)$ is the open combinatorial $1$-ball around $\partial D'$. Consequently, $\varphi'$ restricted to $D=D'-\varphi'^{-1}(X'-X)$ is a reduced disc diagram with boundary path $\gamma$.
\end{rem}

\begin{proof}[Proof of Lemma~\ref{lem:injective}]
Let $\varphi \colon D\to X$ be a minimal area reduced disc diagram with boundary path in $Z$ representing a nontrivial element of $\pi_1(Z)$. Note that $D$ cannot have area $0$. Let $Z'=Z\cup \varphi(D)$. Observe that $Z'$ is essential, since $Z$ is essential and $\varphi$ is reduced. Furthermore, let $T$ be a triangle of $D$ adjacent to a boundary edge $e$. By the minimality of area, $\varphi(e)$ has degree $\geq 3$ in~$Z'$.
\end{proof}

{\kol \begin{prop}
\label{prop:OP}
Let $X$ be a simply connected $2$-dimensional simplicial complex that contains no simplicial $2$-spheres with
a finitely generated group~$G$ acting almost freely and without inversions. If $X$ contains no thick {$G$-\sics}, then
$G$ is virtually cyclic, or virtually $\mathbb{Z}^2$, or contains a nonabelian free group.
\end{prop}}
\begin{proof}
Consider possible essential $G$-\sics\ $Z\subseteq X$. 
By Corollary~\ref{cor:essform}, for each triangle $T$ of $Z$ we have that $Y=\mathrm{Gal}_Z(T)$ is a simplicial plane. If any such~$Y$ is not a flat, then it is a hyperbolic simplicial plane. Consequently, the stabiliser $\mathrm{Stab}_G(Y)$, which acts properly and cocompactly on~$Y$, contains a nonabelian free group (by e.g.\
\cite[Thm~8.37]{GhydlH1990}). If $Y$ and $Y'$ are two such intersecting flats, then by Corollary~\ref{cor:injective}, the connected component $W$ of $Z$ containing $Y\cup Y'$ is an infinite valence tree of flats and thus $\mathrm{Stab}_G(W)$ contains a nonabelian free group. It remains to consider the case where each~$Z$ is a disjoint union of flats. Then by Lemma~\ref{l:noics}
we have that $G$ is virtually cyclic, or virtually $\Z^2$, or contains a nonabelian free group.
\end{proof}

\begin{proof}[Proof of the Main Theorem]
If $X$ contains a simplicial $2$-sphere $\Sigma$, then there is no triangle $T_0$ in $\Sigma$ with an $a_0\in A$ in some $\lk_{x_0} T_0$. Indeed, otherwise using Definition~\ref{def:build-like}(ii) and~(iii) we could construct in $A$ an infinite sequence $a_0,a_1, \ldots$ such that for all $i\geq 0$ we have $a_{i+1}\in H(I(a_i))$, with $a_i$ in some $\lk_{x_i}T_i$ and $T_i\subset \Sigma$. This would contradict Definition~\ref{def:build-like}(i) or~(v).
Consequently, by Definition~\ref{def:build-like}(iv) each edge of $\Sigma$ has degree 2 in $X$. We can thus remove all triangles and edges of each $\Sigma$ and replace them by a cone over the vertex set $\Sigma^0$. After this operation $X$ is still simply connected. We can thus assume that $X$ does not contain simplicial $2$-spheres.

If $X$ contains a thick $G$-\sics \ $Z$, then by Remark~\ref{rem:hereditary} the triangle complex~$Z$ is recurrent with respect to $G$. Since $G$ acts cocompactly on $Z$, we have that $A/G$ is finite. Moreover, since $G$ acts properly on $Z$, all edges of $Z$ have finite degree in $Z$.
Thus by Lemma~\ref{lem:thick} we have that $G$ contains $F_2$. If $X$ does not contain a thick {$G$-\sics}, then the Main Theorem follows from Proposition~\ref{prop:OP}.
\end{proof}

{\kol \begin{rem} The assumption in Proposition~\ref{prop:OP} that $X$ contains no simplicial $2$-spheres could be removed at the cost of allowing, instead of flats, trees of $2$-spheres and trees of a flat and $2$-spheres in the statement of Lemma~\ref{l:noics}. This would complicate slightly the proof of Lemma~\ref{l:noics}, so we decided to keep this assumption.
\end{rem}}

\section{More applications}
\label{s:B6Ar}

\begin{exa}
\label{exa:polygons}
Let $X$ be a $2$-dimensional combinatorial complex with all edges of length~$1$ and all $2$-cells regular Euclidean $2n$-gons, where $n$ might vary.
Suppose that all cells embed in $X$.
Then the barycentric subdivision~$X'$ of~$X$ is a triangle complex satisfying Definition~\ref{def:build-like}(i)-(iv) w.r.t.\ any automorphism group of $X$. Indeed, we define~$A$ in the triangles forming a given polygon as the directions coming from the union of line segments perpendicular to opposite edge pairs and dividing them in the ratio $1\colon 3$, see Figure~\ref{f:3}.
\end{exa}

\begin{figure}[h!]
	\centering
	\includegraphics[width=0.65\textwidth]{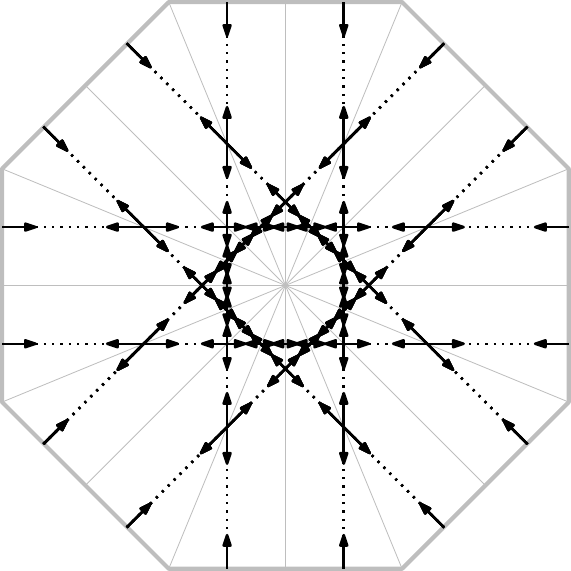}
	\caption{}
	\label{f:3}
\end{figure}

Here we study two classes of complexes, where we have also Definition~\ref{def:build-like}(v), and consequently $X'$ is recurrent w.r.t.\ any automorphism group of $X$.

\subsection{$B(6)$ complexes}
\label{s:B6}

The following notion was introduced by Wise \cite{Wise2004}. Let $X$ be a $2$-dimensional combinatorial complex. $X$ satisfies the \emph{$B(6)$ ({small cancellation}) condition} if for each $2$-cell $R$, and for each path $S\to \partial R$ which is the concatenation of at most $3$ pieces, we have $|S|\leq |\partial R|/2$, where $|\cdot |$ denotes the number of edges in a path. (See \cite{Wise2004} for definitions of paths, pieces, and further details.) In particular, $B(6)$ complexes satisfy the combinatorial $C(6)$ small cancellation condition \cite[Prop~2.11]{Wise2004}.

\begin{exa}
\label{exa:B6}
Suppose that $X$ is simply connected and satisfies $B(6)$. By subdividing each edge into two, we can assume that for every $2$-cell $R$ of~$X$ the length $|\partial R|$ is even.
A \emph{hypergraph} in $X$ is then a connected component~$\Lambda$ of a graph whose vertices correspond to edges of $X$ and whose edges correspond to pairs of antipodal edges in $R$, with the obvious map $\Lambda \to X$ \cite[Def 3.2 and Rem 3.4]{Wise2004}. Equip the barycentric subdivision $X'$ of $X$ with the metric and $A$ of Example~\ref{exa:polygons}. Note that all the $2$-cells of $X$ embed by \cite[Cor~2.9]{Wise2004}.

Observe that any sequence $a_0,a_1,\ldots, a_n$ of elements of $A$, with $a_{i+1}\in H(I(a_i))$ for $0\leq i<n$ extends to such a sequence with $a_0\in \lk_xT, a_n\in \lk_{x'}T'$ with $x,x'$ in the edges of $X$ (and dividing them in the ratio $1\colon 3$). Joining consecutive $a_i$ by geodesics we obtain a local geodesic segment $\gamma\to X$, which factors (up to a distance $\frac{1}{4}$ translation) through a hypergraph $\Lambda\to X$.
Since $\Lambda\to X$ is an embedding \cite[Cor 3.12]{Wise2004}
we have Definition~\ref{def:build-like}(v), and thus $X'$ is recurrent with respect to any automorphism group of $X$. Consequently
the Main Theorem applies to $X'$, implying Theorem~\ref{main:B6}.
\end{exa}

Our arguments do not extend directly to $C(6)$-small cancellation complexes because, as shown in \cite[\S3.5]{Wise2004}, in that case a hypergraph might not embed. It is an open question (see e.g.\ \cite[Prob 1.4]{Wise2004}) whether one can define any reasonable `walls' in that case.

\subsection{Artin groups of extra-large type}
\label{s:Ar}

Let $\Gamma$ be a finite simple graph with each of its edges labeled by an integer $\ge 2$. Let $V\Gamma$ be the vertex set of $\Gamma$. The \emph{Artin group} $A_{\Gamma}$ is given by the following presentation, where $p_m(a,b)$ denotes the word $\underbrace{aba\cdots}_{m}$:
\begin{center}
	$\langle V\Gamma\ |\ p_m(a,b)=p_m(b,a)$ for each edge $ab$ labelled by $m \rangle$.
\end{center}

We call the presentation above the \emph{standard presentation} for $A_{\Gamma}$. The Artin group $A_{\Gamma}$
is \emph{of extra-large type}
if all the labels satisfy $m\ge 4$. The \emph{Coxeter group $W_\Gamma$} is obtained from $A_\Gamma$ by adding the relations $a^2=1$ for all $a\in V\Gamma$.

Let $X$ be the Cayley complex of $A_{\Gamma}$ for the standard presentation.
It consists of cells that are $2m$-gons with $m$ the labels of $\Gamma$.
A \emph{hypergraph} $\Lambda\to X$ is defined as in Example~\ref{exa:B6}. Note that $\Lambda\to X$ is an embedding, since it projects to a hypergraph in the Cayley complex of $W_\Gamma$, which is embedded. Similarly, $X-\Lambda$ has two connected components.

\begin{prop}
	\label{l:hypgr}
Suppose that $A_\Gamma$ is of extra-large type. Then each hypergraph $\Lambda$ in $X$ is a tree.
\end{prop}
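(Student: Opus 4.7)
The plan is to transfer the tree-ness of hypergraphs from the Coxeter setting to the Artin setting, in exact parallel with the embeddedness argument given in the preamble.

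First, I would use the quotient $A_\Gamma \twoheadrightarrow W_\Gamma$ to induce a combinatorial map $\pi\colon X \to X_W$ of $2$-complexes, where $X_W$ denotes the Cayley $2$-complex of $W_\Gamma$ with the bigon relators $s^2=1$ collapsed to single edges. The map $\pi$ sends every $2m$-gon of $X$ to a $2m$-gon of $X_W$ preserving antipodal edge pairs, and therefore carries the hypergraph $\Lambda$ into a well-defined hypergraph $\Lambda_W$ of $X_W$ as a graph morphism. By the same logic used in the preamble to establish embeddedness of $\Lambda \to X$, the induced graph map $\Lambda \to \Lambda_W$ is injective, exhibiting $\Lambda$ as a connected subgraph of $\Lambda_W$.

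Second, I would argue that $\Lambda_W$ is itself a tree. Hypergraphs in the Cayley $2$-complex of a Coxeter group correspond to walls of reflections; in the Davis--Moussong realization of $W_\Gamma$ of extra-large type (all labels $m \geq 4$), the complex is $2$-dimensional and CAT$(-1)$, so every wall is a $1$-dimensional convex subcomplex of a CAT$(-1)$ space, hence a tree. Alternatively, one can verify directly that $X_W$ satisfies the $B(6)$ small cancellation condition after subdividing each edge once (as in Example~\ref{exa:B6}): pieces in the standard presentation of $W_\Gamma$ have length $1$, so any concatenation of at most three pieces has length at most $3 < 4 \leq m = |\partial R|/2$, and \cite[Cor 3.12]{Wise2004} then supplies the tree-ness of $\Lambda_W$.

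Finally, a connected subgraph of a tree is a tree, so $\Lambda$ itself is a tree. The main technical point in this plan is verifying both the injectivity of $\Lambda \to \Lambda_W$ and the tree-ness of $\Lambda_W$; these parallel the arguments already present in the preamble's treatment of embeddedness, and the only real subtlety is the careful handling of the $s^2=1$ bigons of $W_\Gamma$, which are eliminated transparently after the standard subdivision trick.
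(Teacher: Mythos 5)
Your plan has a genuine gap at the central step, and the gap is precisely what the proposition is about.

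You need to commit to what $X_W$ is, and neither choice makes the argument work. If $X_W$ is the Cayley $2$-complex of $W_\Gamma$ (one $2$-cell per pair in $W_\Gamma\times\mathcal R$, with the $s^2$ bigons collapsed), then for each edge $ab$ of $\Gamma$ and each coset $w_0W_{ab}$, all $2m_{ab}$ relator cells based at elements of that coset have \emph{the same} boundary cycle: the Cayley graph of $w_0W_{ab}$ is a $2m_{ab}$-cycle and the relator word traverses it exactly once. Consequently the antipodal pairing is identical in all of them, $\Lambda_W$ has multi-edges of multiplicity $2m_{ab}$, and hence $\Lambda_W$ is not a tree. This also kills the $B(6)$ fallback: two $2$-cells sharing their entire boundary produce a piece of length $2m_{ab}>|\partial R|/2$, so $X_W$ is not even $C(2)$, let alone $B(6)$. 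If instead $X_W$ is the Davis complex (one $2$-cell per coset $wW_{ab}$), then the wall $\Lambda_W$ is indeed a tree, but the map $\Lambda\to\Lambda_W$ is not injective and not even an immersion: at a vertex $e$ of $\Lambda$ labelled $a$, the $m_{ab}$ hypergraph edges of $\Lambda$ coming from the $2$-cells of the $ab$-block of $X$ containing $e$ all map to the single wall-edge at $\pi(e)$. Your appeal to ``the same logic used in the preamble'' does not supply injectivity of $\Lambda\to\Lambda_W$; the preamble only uses the projection to get embeddedness and $2$-sidedness of $\Lambda$ in $X$, which is a weaker statement.

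Concretely, what has to be ruled out is a cycle in $\Lambda$ that ``folds'' under $\pi$: two consecutive edges of $\Lambda$ at a vertex $e$, coming from distinct $2$-cells $R_1\neq R_2$ of the same block of $X$, whose images in $X_W$ or $\Sigma$ coincide. This cannot be dismissed: distinct $2$-cells of a block do have the same image in the Davis complex, and different preimages of the antipode of $\pi(e)$ can a priori be joined by a path in $\Lambda$. Showing this does not happen is the actual content of the proposition. The paper does this by working directly in $A_\Gamma$ with small cancellation theory: a $C(4)$--$T(4)$ strip analysis \`a la Appel--Schupp in the dihedral case (using Lemma~\ref{lem:strips} and Lemma~\ref{lem:corner}), and a $C(6)$ diagram argument together with a block decomposition and Lemma~\ref{l:block} in the general extra-large case. (Tellingly, the appendix's extension Proposition~\ref{l:hypgr2} also does not deduce tree-ness of $\Lambda$ from tree-ness of walls; it proves a local disjointness statement about walls in the Moussong metric and then still runs a minimal-diagram argument parallel to the one in the main text.)
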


By Example~\ref{exa:polygons} and Proposition~\ref{l:hypgr}, the Main Theorem applies to $X'$ implying Theorem~\ref{main:Artin} for finitely generated subgroups of $G$.

\smallskip

The remaining part of the section is devoted to the proof of Proposition~\ref{l:hypgr}.

\begin{lemma}
	\label{l:block}
Let $\Gamma$ be a single edge $ab$ with label $m\geq 3$, and let $\Lambda\subset X$ be a hypergraph.
Suppose that we have an edge-path $\gamma$ in $X$ with only the first and the last edge corresponding to vertices of $\Lambda$. Then $\gamma$ is not labelled by a word of the form $a^kba^l, b^kab^l$ or $a^kb^l$ with $k,l\in \Z-\{0\}$.
\end{lemma}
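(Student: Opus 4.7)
My plan is to argue by contradiction, handling the three label shapes in a unified way after a preliminary reduction. The key structural input is that the defining relator $p_m(a,b)p_m(b,a)^{-1}$ of $A_\Gamma$ is strictly alternating in the two letters $a$ and $b$, so every boundary subword of any $2$-cell of $X$ contains no two consecutive equal letters. In particular, an antipodal pair in a $2m$-gon carries the same letter when $m$ is even and different letters when $m$ is odd; hence for $m$ even every hypergraph consists of edges bearing a single letter, which immediately rules out $\gamma$ with label $a^kb^l$ in that case.

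In the remaining cases, because $e_0, e_n\in\Lambda$ there is a hypergraph arc $e_0=f_0, f_1,\dots, f_s = e_n$ with $f_{i-1}, f_i$ antipodal in a common $2m$-gon $R_i$. In each $R_i$ I pick a boundary arc of length $m-1$ from an endpoint of $f_{i-1}$ to an endpoint of $f_i$, choosing consistently on one side of $\Lambda$, and let $\gamma'$ be their concatenation. Since $X$ is simply connected, Remark~\ref{rem:discs_exist} applied to the closed edge-path $\gamma\cdot(\gamma')^{-1}$ yields a reduced disc diagram $\varphi\colon D\to X$. I then minimise the triple $(\mathrm{area}(D),\, s,\, n)$ lexicographically among all such $(\gamma,(R_i),D)$.

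By the preliminary observation, no $2$-cell of $D$ can be glued along a subpath of $\gamma$ that contains a repeated letter such as $aa$ or $bb$. I would use this to show that large $a$-runs or $b$-runs in the label of $\gamma$ force interior edges of $D$ leading into $2$-cells whose opposite boundary arcs meet $\gamma'$; chasing these, one obtains either a shorter hypergraph arc between $e_0$ and $e_n$ (contradicting minimality of $s$), a smaller-area diagram, or a shorter $\gamma$. This reduces to $|k|=|l|=1$ and $s=1$, so that $e_0$ and $e_n$ are antipodal in a single $2m$-gon $R$. The endpoint $v_0^{-1}v_{n+1}$ is then determined up to finitely many possibilities by $R$, and a direct computation in the dihedral Artin group $A_m$ shows that none of these equals the putative label for $m\geq 3$; the only apparent coincidence arises at $m=3$ via $aba=bab$, but in that case the middle edge of $\gamma$ would itself lie in $\Lambda$, contradicting the hypothesis.

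The main obstacle is the reduction step: converting the strictly-alternating structure of the relator into a genuine reduction of $\mathrm{area}(D)$, $s$, or $n$ requires a careful analysis of how the $2$-cells of $D$ adjacent to $\gamma$ propagate into $\gamma'$, and this is where the bulk of the bookkeeping (including signs of $k$ and $l$) lives. A cleaner alternative route would be to identify $\Lambda$ algebraically with a coset of some cyclic subgroup of $A_m$ (for instance, one generated by a Garside element) and run the entire argument inside the group, but this requires an independent structural result about hypergraphs in dihedral Artin groups that is not available in the excerpt.
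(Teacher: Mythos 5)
Your approach via minimal reduced disc diagrams is genuinely different from the paper's, which avoids diagrams entirely: the paper projects to the Coxeter group $W_\Gamma = W_{ab}$ and notes that each hypergraph edge contributes a boundary arc whose label projects to the identity in $W_\Gamma$ (except for the half-edges at the two ends, which contribute $wa$ or $wb$ for $w$ the longest element). Since the hypergraph arc from $e_0$ to $e_n$ and the path $\gamma$ yield words that must project to the same element of $W_\Gamma$, one simply checks that the possible projections of $a^kba^l$, $b^kab^l$, $a^kb^l$ (namely $aba, ab, ba, b$, etc., depending on parities of $k, l$) never coincide with $1$, $wa$, or $wb$. This is a short finite verification and is the entire proof.

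Your proposal, by contrast, has a genuine and acknowledged gap. You reduce to the claim that by lexicographic minimality of $(\mathrm{area}(D), s, n)$ one can force $|k|=|l|=1$ and $s=1$, but the reduction is never carried out: you write that ``chasing these, one obtains either a shorter hypergraph arc \ldots, a smaller-area diagram, or a shorter $\gamma$'' and then concede this is where ``the bulk of the bookkeeping (including signs of $k$ and $l$) lives.'' That bookkeeping is precisely the content needed; without it you do not have a proof. Note also that your preliminary observation for $m$ even only eliminates the label $a^kb^l$, not $a^kba^l$ or $b^kab^l$, so the $m$-even case is not actually dispatched up front. Finally, your suggested ``cleaner alternative'' of realising $\Lambda$ as a coset of a cyclic subgroup is, in spirit, close to what the paper does, but the paper implements it by passing to $W_\Gamma$ (where the hypergraph becomes a wall fixed by a reflection) rather than working inside $A_\Gamma$, which sidesteps the need for the extra structural lemma you say is unavailable.
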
	
\begin{proof} Let $w\in W_\Gamma$ denote the element represented by the longest word $p_m(a,b)$. Consider an edge of~$\Lambda$. Its vertices correspond to opposite edges $e,e'$ in a $2$-cell $D$ of $X$. Thus $e$ and~$e'$ are labelled by the same (resp.\ distinct) letters for $m$ even (resp.\ odd) and oriented towards the same side of $\Lambda$. A path in $\partial D$ starting with $e$ and ending with $(e')^{-1}$ is labelled by a word projecting to $wa$ or $wb$ in~$W_\Gamma$. If we consider another edge of~$\Lambda$ with vertices corresponding to $e',e''$, a path starting with $e$ and ending with $(e'')^{-1}$ is labelled by a word projecting to $1\in W_\Gamma$. Consequently, a concatenation of such paths that is starting with (an edge labelled by) $a$ and ending with~$a^{-1}$ is labelled by a word projecting to $1$ or possibly to $wa$ for $m$ even, which is distinct from the possible projections $aba,ab,ba,b$ of $a^kba^l$. Similarly, if such a concatenation of paths is starting with $a$ and ending with~$b^{-1}$, then $m$ is odd, and the path is labelled by a word projecting to $wb$, which is distinct from the possible projections $ab,a,b,1$ of $a^kb^l$.
\end{proof}

\begin{lemma}
\label{lem:corner}
Let $\Gamma$ be a single edge $ab$ with label $m\geq 3$. Let $u$ be a cyclically reduced word representing $1\in A_\Gamma$. Then, possibly after a cyclic permutation of $u$, there are two subwords $w_1,w_2$ of $u$ of forms $p_{m}(a,b),p_{m}(a,b)^{-1},p_{m}(b,a),$ or $p_{m}(b,a)^{-1}$, none of whose letters lie in the same syllable of $u$, nor any of the cyclic permutations of $u$.
\end{lemma}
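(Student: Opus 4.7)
My plan is to invoke a minimal-area reduced disc diagram $\varphi\colon D\to X$ over the standard presentation of $A_\Gamma$ with boundary path reading $u$ (Remark~\ref{rem:discs_exist}); since $u$ is a nontrivial cyclically reduced word representing $1$, the area of $D$ is at least one. I would split the argument according to whether $D$ has exactly one $2$-cell or more.

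When $D$ has a single $2$-cell, $u$ is (up to cyclic permutation) the relator $p_m(a,b)\,p_m(b,a)^{-1}$, and I would take $w_1=p_m(a,b)$, $w_2=p_m(b,a)^{-1}$. Consecutive letters of this word alternate in generator inside each of the two halves and switch sign at the two (cyclic) junctions, so each of the $2m$ letters is a singleton syllable of $u$, and they occupy $2m$ pairwise distinct syllables even after arbitrary cyclic rotation. When $D$ has at least two $2$-cells, my plan is to produce two distinct $2$-cells $R_1,R_2$ of $D$ together with sub-arcs $\alpha_i\subseteq\partial R_i\cap\partial D$ of length $m$ that are aligned with a single half (positive or negative) of the $2m$-gon $\partial R_i$. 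The labels of $\alpha_1,\alpha_2$ then give the two subwords $w_1,w_2$ of $u$ of the required form; disjointness of $\alpha_1,\alpha_2$ on $\partial D$ combined with the alternating structure of each $p_m$-word places all $2m$ letters of $w_1\cup w_2$ into $2m$ distinct singleton syllables of $u$, cyclically.

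The main obstacle is the existence of aligned shells: a generic Greendlinger-type lemma for large-type Artin groups (in the style of Appel--Schupp) yields shells with boundary arcs of length exceeding $m$, but such an arc may straddle the boundary between the positive half $p_m(a,b)$ and the negative half $p_m(b,a)^{-1}$ of its relator, and then contain no $p_m$-subword of length exactly $m$. To overcome this, I would refine the analysis in one of two ways: either (a) strengthen the shell bound to arc-length $\geq 2m-1$, which automatically contains a full half of the relator regardless of starting position; or (b) carry out a diagram-surgery argument showing that a minimal-area diagram cannot contain a straddling shell, since pushing the shell's arc across the $2m$-gon and cyclically reducing would yield a strictly shorter trivial word and, by induction on length, produce the sought $w_1,w_2$ which lift back to $u$. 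Either route yields two aligned shells and completes the proof.
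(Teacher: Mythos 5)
Your high-level strategy matches the paper's: take a minimal reduced diagram for $u$, handle the single-region case directly, and in the general case locate two distinct regions whose outside boundaries each contain a full half $p_m(\cdot,\cdot)^{\pm 1}$ of the relator. You also correctly identify the central difficulty, namely that a boundary arc of a shell may straddle the junction between the positive and negative halves of the $2m$-gon. However, neither of your two proposed fixes works as stated, and you have also underestimated a second difficulty concerning the syllable condition.

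Regarding fix (a): you cannot strengthen the shell bound to outside-boundary length $\geq 2m-1$. For the dihedral Artin relator $p_m(a,b)p_m(b,a)^{-1}$ (length $2m$), the maximal piece length is $m-1$, so even a simple boundary region with a single interior edge (a ``singleton strip'') is only guaranteed outside boundary of length $\geq m+1$, and regions in the middle of a ``compound strip'' can have outside boundary as short as $2$. So the claimed bound is false, and moreover $m+1$ is still short enough to straddle. What the paper actually proves (following Appel--Schupp) is not a length bound but a positional one: each strip contains an \emph{exposed} region, meaning one whose two \emph{separating vertices} (the endpoints of the $p_m(a,b)$- and $p_m(b,a)^{-1}$-labelled halves) both lie on the outside boundary. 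That is exactly what guarantees a non-straddling length-$m$ half, and it requires \cite[Lem 5]{AppSch1983}, not just a Greendlinger estimate. Regarding fix (b): the diagram-surgery idea is vague, but it is in fact closest to what the paper does in case (i) of Lemma~\ref{lem:strips} --- remove a singleton strip, apply the inductive hypothesis to the smaller diagram, and then carefully transport the resulting $w_1',w_2'$ back to $u$, which needs a case analysis depending on whether the removed strip touches the interior of the chosen arcs. So (b) is the right direction but needs to be coupled with the strips/exposed-region structure, and with the inductive strengthening (that each $w_i$ labels the outside boundary of a single region between its separating vertices) which makes the induction close.

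There is also a gap in your treatment of the syllable condition. You argue that ``disjointness of $\alpha_1,\alpha_2$ on $\partial D$ combined with the alternating structure'' places the letters in distinct syllables, but disjointness of arcs on $\partial D$ does not prevent, say, the last letter of $w_1$ (some $a^{\pm1}$) from lying in the same syllable of $u$ as letters of $u$ that follow it. The paper rules this out by choosing the two exposed regions $D_1,D_2$ so that they are separated along $\partial M$ by outside boundaries of other simple boundary regions of interior degree $\leq 2$; each such region has outside boundary of length $\geq 2$ and hence \emph{witnesses a syllable change}. Producing two exposed regions separated in this way is itself a nontrivial extraction from the strip structure (the paper's ``claim'' inside the proof), and you would need an analogue of it.
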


In the proof we will use the small cancellation techniques of \cite{AppSch1983}. Here we recall only the notions that are less standard and we refer the reader to \cite{AppSch1983} for more details. An \emph{$\mathcal R$-diagram} $M$ is a van Kampen diagram, with the boundary path of each $2$-cell (called a \emph{region}) labelled by a word in a set of relators~$\mathcal R$. If we ignore the labelling, $M$ is just called a \emph{diagram}. A \emph{spike} of~$M$ is a boundary vertex of valence $1$.
The \emph{interior degree} $i(D)$ of a region $D$ is the number of interior edges of $\partial D$ (after forgetting vertices of valence $2$). $D$ is a \emph{simple boundary region} if its \emph{outside boundary} $\partial D\cap \partial M$ is nonempty, and $M-\overline D$ is connected. A \emph{singleton strip} is a simple boundary region with $i(D)\leq 1$. A \emph{compound strip} is a subdiagram $R$ of $M$ consisting of regions $D_1,\ldots, D_n,$ with $n\geq 2$ with $D_{k-1}\cap D_k$ a single interior edge of $R$ (after forgetting vertices of valence $2$), satisfying $i(D_1)=i(D_n)=2,i(D_k)=3$ for $1<k<n$ and $M-R$ connected.

We will use the following \cite[Lem 2]{AppSch1983}, where case~(ii) needs to be added to account for a minor error in the third paragraph of their proof where the singleton strip $D$ might be glued to a region contained in two distinct strips of $M'$.
\begin{lemma}
\label{lem:strips}
Let $M$ be a simply connected diagram with no spikes and more than one region. If $M$ satisfies $C(4)$ and $T(4)$, then
\begin{enumerate}[(i)]
\item
$M$ contains two singleton strips, or
\item
$M$ contains one singleton strip and two compound strips, or
\item
$M$ contains four compound strips.
\end{enumerate}
\end{lemma}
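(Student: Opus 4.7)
The plan is to follow the argument of [AppSch1983, Lem~2] via a combinatorial Gauss--Bonnet estimate, taking care with the inductive bookkeeping in order to obtain the case (ii) that is missing from the original paper. I would assign to each interior vertex $v$ of $M$ the angular defect $(2-\deg v)\tfrac{\pi}{2}$, to each interior region $D$ the defect $(2-i(D))\tfrac{\pi}{2}$, and to boundary vertices and boundary regions the standard corrections that make the total curvature equal to $2\pi\chi(M)=2\pi$. Under $C(4)$ and $T(4)$, every interior vertex has degree at least $4$ and every interior region has $i(D)\ge 4$, so each contributes nonpositively, and the curvature of the simply connected diagram $M$ is concentrated on the boundary.

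Next I would catalogue which boundary features can carry positive curvature. A singleton strip (a simple boundary region with $i(D)\le 1$) contributes a definite positive amount bounded below. A compound strip $D_1\cup\cdots\cup D_n$, after summing the end contributions from $i(D_1)=i(D_n)=2$, contributes roughly half as much. Every other simple boundary region has $i(D)\ge 2$ and contributes at most $0$, and the no-spikes hypothesis rules out extra positive contributions from valence-$1$ boundary vertices. Matching the sum of these boundary contributions against $2\pi$ shows that, weighting a singleton as twice a compound, the total strip count is at least four, which is precisely the content of (i), (ii), (iii).

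To make this rigorous I would induct on the number of regions of $M$. The base case of two regions collapses, by the no-spikes hypothesis, to two adjacent singleton strips, giving (i). For the inductive step I would locate a singleton or compound strip $D$, remove it to obtain a smaller diagram $M'$ that still satisfies the hypotheses, and apply the inductive conclusion to $M'$. The delicate point, and the reason for case (ii), is the following. When a singleton strip $D$ of $M$ is deleted, its unique neighbouring region $D'$ becomes a boundary region of $M'$ and may legitimately lie in two distinct strips of $M'$. Pulling back through $D$ does not merge those two strips into one, since $D$ itself sits between them in $M$. In that situation $M$ contains the singleton $D$ together with those two strips (which remain compound strips in $M$), producing configuration (ii).

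The main obstacle will be the combinatorial bookkeeping in the inductive step: verifying that removing $D$ preserves simple-connectedness, $C(4)$, $T(4)$, and the absence of spikes, and tracking precisely how strips of $M'$ correspond to strips of $M$, especially identifying the ``$D'$ in two strips'' configuration that forces (ii). The curvature inequalities themselves are routine; the genuinely subtle part is the case analysis that converts them into the three alternatives stated.
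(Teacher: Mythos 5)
The paper does not give an independent proof of this lemma: it simply invokes \cite[Lem~2]{AppSch1983} and records that case~(ii) must be added because, in the inductive step of Appel--Schupp's argument, the removed singleton strip $D$ may be glued to a region lying in two distinct strips of the smaller diagram $M'$. Your proposal reconstructs exactly this: an induction on the number of regions in the Appel--Schupp style, with the source of case~(ii) located at precisely the same spot, so in that sense the approach matches the paper's.

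One caution about the bookkeeping in your last paragraph: the assertion that the two strips of $M'$ meeting $D'$ ``remain compound strips in $M$'' is not correct as stated. If $D'$ is a shared \emph{end} of two compound strips of $M'$, then $i_{M'}(D')=2$, and re-attaching $D$ raises this to $i_M(D')=3$, so $D'$ no longer qualifies as an end region and neither of those two strips of $M'$ survives as a compound strip of $M$. The inductive step must instead be argued by noting that $D'$ can lie in at most two of the strips produced by the inductive hypothesis on $M'$, so (for instance, when $M'$ falls in case~(iii)) at least two compound strips of $M'$ are untouched by $D'$ and persist in $M$, which together with the singleton $D$ yields configuration~(ii). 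The conclusion you reach is right, but the reason is that the \emph{other} strips of $M'$ survive, not the two meeting $D'$.
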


\begin{proof}[Proof of Lemma~\ref{lem:corner}]
Let $M$ be a minimal $\hat {\mathcal R}_{ab}$-diagram for $u$, where $\hat {\mathcal R}_{ab}$ is the symmetrised set obtained from the relator $p_m(a,b)(p_m(b,a))^{-1}$. Each region of $M$ has two \emph{separating vertices} separating the paths in its boundary labelled $p_m(a,b)$ and $(p_m(b,a))^{-1}$. We prove inductively on the number of regions of $M$ a slight strengthening of Lemma~\ref{lem:corner} where we add the requirement that each $w_i$ labels a path in the outside boundary of a region of~$M$ (which joins its separating vertices).

If $M$ has a single region, then (after possibly a cyclic permutation and/or an inversion) we have $u=p_m(a,b)(p_m(b,a))^{-1}$ and we are done. Otherwise we can apply Lemma~\ref{lem:strips}, since by \cite[Lem~3]{AppSch1983}, $M$ satisfies $C(4)$ and $T(4)$.

In cases (ii) and (iii) we claim that there are regions $D_1,D_2$ in the strips of~$M$ with both of their separating vertices on their outside boundaries (we will call such regions \emph{exposed}), and such that the outside boundaries of $D_1,D_2$ are separated in $\partial M$ by outside boundaries of other simple boundary regions of interior degree at most~$2$.

To justify the claim, by \cite[Lem 5]{AppSch1983} each strip $S$ has an exposed region~$D$. Consequently, in case (ii), if we denote by $D_s$ the singleton strip and by $S,S'$ its consecutive (clockwise and counterclockwise) compound strips, there are exposed regions $D$ in $S$ and $D'$ in $S'$. We can take $D_1=D_s$ and $D_2$ to be $D$ or $D'$ unless $D$ and $D'$ are the regions consecutive to $D_s$ among the interior degree $2$ regions of $M$ lying in $S$ and $S'$. But then we can take $D_1=D, D_2=D'$ instead. Finally, in case (iii), if we call any exposed region $D_s$, we have two strips $S,S'$ disjoint from $D_s$ and the same procedure as in case (ii) yields the required $D_1,D_2$. This justifies the claim.

Choose $w_1,w_2$ labelling the paths in the outside boundaries of $D_1,D_2$ between their separating vertices. Note that each simple boundary region of interior degree at most~$2$ has outside boundary of length at least $2$, and hence witnesses a syllable change in $u$. Thus by the claim the letters of $w_1,w_2$ do not lie in the same syllable.

It remains to consider case (i). Let $D$ be a singleton strip and let $M'$ be the diagram obtained from $M$ by removing $D$ and possibly a spike or a sequence of spikes, so that $M'$ has cyclically reduced boundary word $u'$. By induction hypothesis, $u'$ has appropriate subwords $w_1',w_2'$ in the outside boundaries of single regions. We can choose $w_1=w_1', w_2=w_2'$ for $u$, unless the intersection $\beta$ of $D$ or the final spike with $M'$ contains an interior vertex of the path $\alpha$ labelled by one of the $w'_i$, say $w'_1$. There is an endpoint $x$ of~$\beta$ that is not an endpoint of $\alpha$. Let $w_2=w_2'$. For the choice of $w_1$, suppose first that $\beta$ is not a single vertex. Then $x$ is a separating vertex of~$D$. Thus we can take $w_1$ to be the word labelling the length $m$ subpath of $\partial D -\beta $ starting at $x$. It remains to consider $\beta=x$. In that case we can take $w_1$ to be the word labelling any path (there might be two) in the outside boundary of $D$ joining its separating vertices.
\end{proof}

We are finally ready for the following.

\begin{proof} [Proof of Proposition~\ref{l:hypgr}] We first focus on the case where $\Gamma$ is a single edge. {\kol (For future reference in the proof of Proposition~\ref{l:hypgr2}, note that the following argument works also for $m=2,3$.)}
Suppose that there is a cycle $\gamma$ in $\Lambda$ with edges corresponding to $2$-cells $D_0,D_1,\ldots,D_k=D_0$ and vertices corresponding to edges $e_0,e_1,\ldots, e_k=e_0$ of $X$ with $e_{i-1},e_{i}$ opposite in $D_{i}$ for $i=1,\ldots, k$. Let $u_i$ be the labels on the length $m-1$ paths joining in $\partial D_i$ either the initial vertices or the terminal vertices of the directed edges $e_{i-1},e_{i}$. Then $u=u_1u_2\cdots u_k$ represents the trivial element in $A_\Gamma$ and hence there is an $\hat {\mathcal R}_{ab}$-reduced diagram $M$ for $u$. Suppose that $M$ has minimal possible number $n$ of regions among all cycles $\gamma$ of $\Lambda$.

Attach to $M$ along its boundary all the $D_i$, and glue $D_i$ to $D_{i+1}$ along $e_i$, to form a diagram $M'$. Note that $M'$ is still reduced, since if $D_i$ would cancel with a region $D$ of $M$, we would have (using the observation that $\Lambda$ does not self-intersect) a cycle $\gamma'$ of $\Lambda$ inside $M$, contradicting the minimality of~$n$. Furthermore, $D_i$ cannot cancel with $D_j$, since this would also contradict the minimality of~ $n$. By Lemma~\ref{lem:strips} and \cite[Lem 5]{AppSch1983}, there is $D_i$ with its outside boundary of length at least $m$, which contradicts the definition of a hypergraph. This finishes the case where $\Gamma$ is a single edge.

Now we consider arbitrary $\Gamma$. By \cite{Lek}
for each edge $ab$ of $\Gamma$ the group $A_{ab}$ embeds in $A_\Gamma$. Thus we have in $X$ copies of the Cayley complex $X_{ab}$ of $A_{ab}$, which we call \emph{blocks}, corresponding to the cosets of $A_{ab}$ in $A_\Gamma$.
Given a cycle $\gamma$ in $\Lambda$, define $e_i,D_i$ as before. Let $B_0,B_1,B_2,\ldots, B_m=B_0$ be the consecutive blocks visited by $(D_i)$. Note that $m>0$ by the case of $\Gamma$ a single edge. For $j=1, \ldots, m,$ let $D_{i_j}\subset B_{j-1}, D_{i_j+1}\subset B_j$ be the cells where we transition from $B_{j-1}$ to $B_j$. Let $L_{j}$ be the line that is the connected component of $B_{j-1}\cap B_{j}$ containing $e_{i_j}$ (in fact $L_{j}=B_{j-1}\cap B_{j}$ but we do not need that). Consider closed immersed edge-paths $\delta=\delta_1\delta_2\cdots \delta_m$ in $X$ such that each $\delta_j$ is a path in $B_j$ from $L_{j}$ to $L_{j+1}$ (where $L_{m+1}=L_1$). Note that each $\delta_j$ is nontrivial since otherwise a path in $L_{j}\cup L_{j+1}$ labelled by $a^kb^l$ would contradict Lemma~\ref{l:block}. The word $v=v_1v_2\cdots v_m$, with $v_j$ the label of $\delta_j$, represents the trivial element in $A_\Gamma$ and hence there is an $\mathcal{R}$-reduced diagram $M$ for $v$, where $\mathcal{R}$ is the symmetrised set obtained from the standard presentation of $A_\Gamma$. Choose $\delta$ so that $M$ has the minimal possible number of regions.

By \cite[Lem 8]{AppSch1983} $M$ satisfies $C(6)$, and so if $M$ has more than one region, it has a simple boundary region $D$ with interior degree at most $3$. Suppose w.l.o.g.\ that $\partial D$ is labelled by a word in $a$ and $b$. Since the words labelling the intersections of $\partial D$ with its adjacent regions cannot exceed one syllable, by Lemma~\ref{lem:corner} there is an occurrence of $p_{m-1}(a,b)$ or $p_{m-1}(b,a)$ in the outside boundary of $D$. Since $m\geq 4$, this shows that $v$ has a syllable consisting of a single letter $a$ or $b$, say $b$. Suppose that this $b$ occurs in $v_j$. By the minimality of $M$, we have that $B_j$ is not a copy of $X_{ab}$ and so $\delta_j$ consists of a single edge. This contradicts Lemma~\ref{l:block} applied to $B_j$. If $M$ is a single region, the proof is analogous.
\end{proof}

\section{Infinitely generated groups}
\label{sec:last}

In this section we explain when we can extend the Main Theorem to infinitely generated $G$.

{\kol Let $G$ be a group acting on a $\mathrm{CAT(0)}$ triangle complex $X$ with finitely many isometry types of simplices.
By \cite[{I.7.19}]{BriHaf1999}, we have that $X$ is complete, and so by \cite[{II.2.8}]{BriHaf1999} every finite subgroup of $G$ fixes a cell of $X$. Consequently, if $G$ acts almost freely, then there is a bound on the order of finite subgroups of $G$. Since $G$ acts properly in the sense of \cite[I.8.2]{BriHaf1999},} the following lemma is an immediate consequence of \cite[{II.7.5 and II.7.7(2)}]{BriHaf1999}.

\begin{lemma}
	\label{l:flattorus}
	Let $G$ be a group acting {\kol almost freely} on a $\mathrm{CAT(0)}$ triangle complex with finitely many isometry types of simplices.
	Then every sequence $G_1<G_2<\cdots$ of virtually abelian subgroups of $G$
	stabilises, that is, there is $n$ such that for all $i\geq n$ the inclusion $G_i<G_{i+1}$ is an isomorphism.
\end{lemma}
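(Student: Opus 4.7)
The plan is to deduce the lemma directly from the cited results of \cite{BriHaf1999}. First I would observe that since $X$ is a $\mathrm{CAT}(0)$ triangle complex with finitely many isometry types of simplices and $G$ acts combinatorially, every element of $G$ acts as a semi-simple isometry of $X$ (see \cite[II.6]{BriHaf1999}). This licenses the application of the Flat Torus Theorem and its corollaries in \cite[II.7]{BriHaf1999} to virtually abelian subgroups of $G$.

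Next, by the Flat Torus Theorem, any free abelian subgroup $A\leq G$ of rank $r$ acts cocompactly as translations on an $r$-flat in $X$, so $r\leq\dim X=2$. Hence every virtually abelian subgroup of $G$ has rank in $\{0,1,2\}$. By \cite[II.7.5]{BriHaf1999}, each virtually abelian $H\leq G$ carries a canonical normal free abelian subgroup $A(H)$ of maximal rank, namely the translation subgroup in the Flat Torus splitting of $\mathrm{Min}$. By \cite[II.7.7(2)]{BriHaf1999} combined with the bound $B$ on orders of finite subgroups of $G$, the index $|H:A(H)|$ is uniformly bounded by some constant $C=C(B,X)$.

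Given a chain $G_1<G_2<\cdots$ of virtually abelian subgroups, I would first stabilise the ranks $r_i:=\mathrm{rank}(G_i)\leq 2$, which are non-decreasing and hence eventually equal some $r$. Thereafter, functoriality of $A(\cdot)$ provides $A_i:=A(G_i)\leq A_{i+1}$, an increasing chain of rank-$r$ lattices acting as translations on a common $r$-flat $E\subset X$. Properness of the $G$-action together with the finiteness of isometry types of simplices of $X$ gives a positive lower bound on translation lengths of non-trivial elements of $G$ acting on $E$, and hence a positive lower bound on the covolumes $\mathrm{covol}(A_i,E)$. Since $\mathrm{covol}(A_{i+1})\cdot|A_{i+1}:A_i|=\mathrm{covol}(A_i)$, with covolumes non-increasing and bounded below, the indices $|A_{i+1}:A_i|$ are bounded and the chain $A_i$ must stabilise. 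Combined with $|G_i:A_i|\leq C$, only finitely many subgroups can lie between a fixed lattice $A$ and a bounded-index extension, so the chain $G_i$ also stabilises.

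The main obstacle will be verifying the functoriality of the canonical choice $A(\cdot)$, ensuring $A(G_i)\leq A(G_{i+1})$ along the chain. I expect this to follow from the Flat Torus construction together with the compatibility of $\mathrm{Min}$ sets for inclusions of virtually abelian subgroups of the same rank. A secondary technical point is the positive lower bound on translation lengths of non-trivial elements of $G$ used above, but this is standard for combinatorial actions on $\mathrm{CAT}(0)$ complexes with finitely many isometry types of cells, and in any case only needs to be applied to elements lying on the fixed flat $E$.
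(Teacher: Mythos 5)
The paper offers no argument here at all: it records the lemma as an immediate consequence of \cite[II.7.5 and II.7.7(2)]{BriHaf1999} and moves on. Your proposal is an attempt to reconstruct what lies behind those references, and the overall strategy (semi-simplicity from finitely many shapes, Flat Torus Theorem, bounding $[H:A(H)]$, stabilising ranks, then a covolume/systole argument on translation lattices) is indeed the right one; it is essentially the line of reasoning packaged in the cited Bridson--Haefliger results.

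However, the two points you flag as ``obstacles'' are genuine gaps, and there is a third. First, the subgroup of $H$ acting as translations on the $\mathbb{E}^r$-factor of $\mathrm{Min}$ is normal and of uniformly bounded index, but it need not be free abelian: it contains the (possibly nontrivial) finite kernel of $H\to\mathrm{Isom}(\mathbb{E}^r)$. A torsion-free finite-index replacement exists but is no longer canonical (already $\mathbb{Z}^2\times\mathbb{Z}/2$ has several normal rank-$2$ free abelian subgroups of index $2$), so ``canonical normal free abelian $A(H)$'' as stated is not well-defined. Second, functoriality $A(G_i)\le A(G_{i+1})$ is plausible for the full (torsion-containing) translation preimage, using that the flat directions of $G_i$ and $G_{i+1}$ agree; but it fails in general for a torsion-free choice, which is what you need for a literal lattice-covolume comparison, so the two fixes work against each other and the argument has to be run modulo the finite kernels. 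Third, the phrase ``a common $r$-flat $E\subset X$'' is not quite right: $\bigcap_i\mathrm{Min}(A_i)$ may be empty; what one actually uses is that, for each $i$, both $A_i$ and $A_{i+1}$ act by translations on any flat of $\mathrm{Min}(A_{i+1})$, and that suffices for the covolume inequality. None of these is fatal, but as written the proposal is a sketch of the Bridson--Haefliger proof rather than a complete argument, and the paper deliberately delegates exactly these points to the cited statements. If you want a self-contained version, it is cleaner to pass to $G_i/N_i$ with $N_i$ the kernel of $G_i\to\mathrm{Isom}(\mathbb{E}^r)$ and compare the resulting crystallographic lattices, keeping track of the uniformly bounded $|N_i|$ and point-group orders, rather than trying to choose free abelian lifts inside each $G_i$.
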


In view of Corollaries~\ref{cor:sys} and~\ref{cor:build}, the following completes the proof of Theorem~\ref{main:cat0}.

\begin{cor} If $X$ is a $\mathrm{CAT(0)}$ triangle complex with finitely many isometry types of simplices, then the Main Theorem holds also for infinitely generated~$G$.
\end{cor}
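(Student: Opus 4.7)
The plan is to deduce the infinitely generated case from the already-established finitely generated Main Theorem by showing that, under the extra CAT$(0)$-plus-finite-isometry-types hypothesis, a group $G$ satisfying the remaining hypotheses and \emph{not} containing a nonabelian free group must in fact be finitely generated. Once that reduction is in place, the Main Theorem applies directly to $G$ and yields the desired trichotomy.

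First I would observe that all the hypotheses of the Main Theorem pass to every subgroup $H\le G$. Indeed, by Remark~\ref{rem:hereditary} the complex $X$ is still recurrent with respect to $H$, while simple connectivity of $X$, properness, the absence of inversions, and the uniform bound on the order of finite subgroups restrict to $H$ automatically. In particular, the Main Theorem applies to every \emph{finitely generated} subgroup $H\le G$, so each such $H$ is virtually cyclic, virtually $\Z^2$, or contains $F_2$. If some finitely generated subgroup contains $F_2$, then so does $G$ and we are done; so the remaining case is that every finitely generated subgroup of $G$ is virtually abelian.

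The heart of the argument, and the only place where the CAT$(0)$-plus-finite-isometry-types hypothesis is used, is an exhaustion argument based on Lemma~\ref{l:flattorus}. I would pick any $g_0\in G$ and inductively build a chain of finitely generated subgroups $\langle g_0\rangle \le \langle g_0,g_1\rangle \le \cdots$ by, at each stage at which the current subgroup is a \emph{proper} subgroup of $G$, adjoining an element $g_{k+1}\in G$ not contained in it. By the previous paragraph every term of this chain is virtually abelian, so Lemma~\ref{l:flattorus} forbids the chain from being strictly ascending forever. Hence at some finite stage the construction must halt because $\langle g_0,\ldots,g_n\rangle=G$, showing that $G$ is finitely generated. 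Applying the Main Theorem to $G$ then completes the proof.

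I expect the main point to watch, though it is essentially bookkeeping, to be justifying that each intermediate $\langle g_0,\ldots,g_k\rangle$ is virtually abelian \emph{before} one knows $G$ itself is finitely generated; this is exactly what the hereditary form of the Main Theorem (via Remark~\ref{rem:hereditary}) supplies on a subgroup-by-subgroup basis, so the argument closes without a genuine obstacle.
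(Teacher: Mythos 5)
Your proposal matches the paper's proof essentially verbatim: consider the family of finitely generated subgroups, note each is virtually abelian (or you are done by containing $F_2$), and use Lemma~\ref{l:flattorus} to rule out an infinite strictly ascending chain, so $G$ equals one of them. The only cosmetic difference is that you phrase the chain as successively adjoining generators rather than as an abstract directed family, but the argument is the same.
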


\begin{proof} Consider the family of finitely generated
subgroups $G_\lambda$ of $G$. If any $G_\lambda$ contains $F_2$, then we are done. Otherwise, by the Main Theorem every $G_\lambda$ is virtually $\mathbb{Z}^2$ or virtually cyclic. It remains to observe that for some~$\lambda$ we have $G_\lambda=G$. Indeed, otherwise we can inductively define a sequence $G_{\lambda_1}\lneq G_{\lambda_2}\lneq\cdots$ contradicting Lemma~\ref{l:flattorus}.
\end{proof}

In \cite{Wise2003} Wise presents a procedure of constructing a systolic complex associated to
every simply connected $B(6)$ complex (in fact, to every simply connected $C(6)$ complex). This construction is described in details in \cite{OsaPry2018} and we follow the notation from there.
Without loss of generality we may assume that $X$ is the union of its $2$-cells: otherwise we attach equivariantly a $2$-cell to every edge of degree~$0$.
Then, the \emph{Wise complex} $W(X)$ of $X$ is defined as the nerve of the covering of $X$ by closed $2$-cells.
The Wise complex of a simply connected $C(6)$ complex is systolic, see \cite[Thm~6.7]{Wise2003} and \cite[Thm~7.10]{OsaPry2018}.
\begin{lemma}[{\cite[Lem 2.2]{Prytula2018}}]
	\label{l:flattorus_B6}
	Let $G$ be a group acting properly on a uniformly locally finite systolic complex.
	Suppose that there is a bound on the order of finite subgroups of $G$.
	Then every sequence $G_1<G_2<\cdots$ of virtually abelian subgroups of $G$
	stabilises.
\end{lemma}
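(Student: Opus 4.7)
The plan is to adapt the proof of Lemma~\ref{l:flattorus} to the systolic setting, replacing the $\mathrm{CAT}(0)$ Flat Torus Theorem of~\cite{BriHaf1999} with Elsner's systolic flat torus theorem. For each $G_i$, fix a finite-index free abelian subgroup $A_i$ of rank $n_i$; the index $[G_i:A_i]$ is uniformly bounded in terms of $n_i$ and the bound $N$ on orders of finite subgroups of $G$ (by a Bieberbach-type argument together with the hypothesis). After passing to a subchain I may assume $A_1 \leq A_2 \leq \cdots$, and it suffices to show this chain stabilises, since only boundedly many virtually abelian extensions of uniformly bounded index above a given $A_i$ can occur inside $G$.

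Step one is a rank bound. By Elsner's systolic flat theorem, each free abelian subgroup $\Z^n \leq G$ acting properly on a systolic complex $X$ preserves an $n$-dimensional combinatorial flat $F \subseteq X$, an isometric copy of the standard triangulation of $\R^n$, on which it acts cocompactly. Since $X$ is uniformly locally finite, $\dim X$ is finite, so $n_i \leq \dim X$ and $(n_i)$ stabilises at some $n$. Step two is a uniform lower bound on covolumes: once $n_i = n$, fix an $A_i$-invariant flat $F_i$. After passing to a further finite-index subgroup of $A_i$ (with uniformly bounded index), $A_i$ acts on $F_i$ by translations sending vertices to vertices, so each nontrivial element has displacement at least the minimum edge length; Minkowski's lattice bound then gives a universal lower bound $c_n > 0$ on the covolume of $A_i$ in $F_i$. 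Step three is to arrange that $F_i = F$ for all large~$i$: since $A_i \leq A_{i+1}$ is central in the abelian group $A_{i+1}$, the group $A_{i+1}$ permutes the $A_i$-invariant flats through any fixed simplex of $F_i$; by uniform local finiteness this is a finite set, and a pigeonhole argument across $i$ yields a common flat $F$. The $A_i$ then form a nested chain of lattices in $\R^n \cong F$ of covolume at least $c_n$, so the chain must stabilise.

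The main obstacle is Step three, producing the common flat. In the $\mathrm{CAT}(0)$ case this is implicit in the splitting $\mathrm{Min}(A_i) \cong F_i \times Y_i$ used in~\cite[II.7]{BriHaf1999}, which canonically identifies parallel flats along the chain; no such global product splitting is available for systolic Min-sets, so one needs Elsner's finiteness and uniqueness statements for periodic systolic flats together with a combinatorial argument over the chain to force $F_i = F_{i+1}$ for all sufficiently large $i$. Once a common flat is secured, the concluding lattice step is essentially the same as in the $\mathrm{CAT}(0)$ case.
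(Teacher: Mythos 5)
The paper itself does not prove this lemma; it simply cites \cite[Lem~2.2]{Prytula2018}, remarking only that the uniform local finiteness hypothesis enters because the systolic Flat Torus Theorem of Elsner \cite[Thm~6.1]{Elsner2009} is currently only available in that setting. So there is no internal proof to compare against, and the proposal has to stand on its own.

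As written it does not. Two problems stand out. First, a factual one: you speak throughout of a free abelian subgroup $\Z^n$ preserving an ``$n$-dimensional combinatorial flat'' in $X$, and you bound $n_i$ by $\dim X$. But systolic complexes contain no isometrically embedded flats of dimension $\geq 3$, and systolic groups contain no $\Z^3$ (Januszkiewicz--\'Swi\k{a}tkowski); Elsner's Flat Torus Theorem is a statement about $\Z^2$ actions and $2$-dimensional flats only. The rank bound $n_i\leq 2$ has nothing to do with $\dim X$ (a uniformly locally finite systolic complex can well have dimension larger than $2$) and everything to do with the systolic flat rigidity. This is not merely cosmetic: your Minkowski/covolume argument in Step two and the ``finitely many flats through a simplex'' count in Step three are phrased for a general $n$ that never arises, while the actual case $n=2$ is where the systolic-specific geometry must be invoked.

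Second, and more seriously, Step three is the entire content of the lemma, and you explicitly flag it as ``the main obstacle'' without actually carrying it out. You gesture at ``Elsner's finiteness and uniqueness statements for periodic systolic flats together with a combinatorial argument over the chain,'' but no such argument is given, and it is precisely here that one must replace the $\mathrm{CAT}(0)$ product decomposition $\mathrm{Min}(A)\cong F\times Y$ with a genuinely systolic substitute. Relatedly, the reduction at the start (``after passing to a subchain I may assume $A_1\leq A_2\leq\cdots$'') is not justified as stated: passing to a subchain of the $G_i$ does not by itself make arbitrarily chosen finite-index free abelian subgroups $A_i$ nested; one has to refine the choice of the $A_i$ (e.g.\ via a canonical translation lattice) using the bound on finite subgroups, and this needs to be spelled out. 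In short, the proposal is a plausible outline whose critical step is left as a placeholder, together with an incorrect invocation of higher-dimensional systolic flats that do not exist.
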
	

{\kol Similarly as in the $\mathrm{CAT(0)}$ setting, finite subgroups of isometries of systolic complexes fix points \cite{ChOs}. Therefore, a group acting almost freely on a systolic complex has a bound on the order of finite subgroups.  Consequently, Lemma~\ref{l:flattorus_B6} implies the following:
}

\begin{cor} If $X$ is a simply connected $B(6)$ complex with uniformly locally finite $W(X)$, then Theorem~\ref{main:B6} holds also for infinitely generated~subgroups of $G$.
\end{cor}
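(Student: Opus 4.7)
The plan is to imitate the proof of the preceding corollary (the $\mathrm{CAT}(0)$ case), with the Wise complex $W(X)$ playing the role of the ambient complex on which the stabilisation lemma is applied. The key observation is that although $X$ itself need not satisfy the hypotheses of any of our stabilisation lemmas, $W(X)$ is systolic by \cite[Thm~6.7]{Wise2003} and uniformly locally finite by assumption, which is exactly the setting of Lemma~\ref{l:flattorus_B6}.

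First I would transfer the action to $W(X)$. Since $W(X)$ is defined as the nerve of the covering of $X$ by closed $2$-cells, any automorphism of $X$ permutes the $2$-cells and hence induces a simplicial automorphism of $W(X)$, giving a simplicial action of $G$ (and so of any subgroup $H<G$) on $W(X)$. The stabiliser in $G$ of a vertex of $W(X)$ coincides with the stabiliser of the corresponding closed $2$-cell of $X$, which is finite by properness of the action of $G$ on $X$ and of order bounded by the assumed bound on finite subgroups of $G$. Thus the action of $H$ on $W(X)$ is proper and has a uniform bound on the order of its finite subgroups, so Lemma~\ref{l:flattorus_B6} applies: every ascending chain of virtually abelian subgroups of $H$ stabilises.

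Now let $H<G$ be an arbitrary, possibly infinitely generated, subgroup, and consider the directed family $\{H_\lambda\}$ of its finitely generated subgroups. If some $H_\lambda$ contains a nonabelian free group, then so does $H$, and we are done. Otherwise, by the finitely generated case of Theorem~\ref{main:B6}, each $H_\lambda$ is virtually cyclic or virtually $\mathbb{Z}^2$, and in particular virtually abelian. If no $H_\lambda$ were equal to $H$, then starting from any $H_{\lambda_1}$ and repeatedly adjoining an element of $H\setminus H_{\lambda_n}$ would produce a strictly ascending sequence $H_{\lambda_1}\lneq H_{\lambda_2}\lneq\cdots$ of virtually abelian subgroups of $H$, contradicting the stabilisation result from the previous paragraph. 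Hence $H=H_\lambda$ for some $\lambda$, and the desired trichotomy for $H$ follows directly from Theorem~\ref{main:B6} applied to that $H_\lambda$.

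The only substantive point to check is the transfer of the action from $X$ to $W(X)$ — that is, that properness and the bound on finite subgroups are preserved under passage to the nerve — and this is straightforward since vertex stabilisers in the nerve are exactly $2$-cell stabilisers in $X$. Everything else is a verbatim adaptation of the $\mathrm{CAT}(0)$ corollary, so I do not expect any serious obstacle beyond invoking the correct version of the stabilisation lemma.
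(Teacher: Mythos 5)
Your proof is correct and matches the paper's intended argument, which is left implicit (the paper states this corollary without proof, immediately after Lemma~\ref{l:flattorus_B6}, clearly expecting the reader to replay the proof of the preceding $\mathrm{CAT}(0)$ corollary with $W(X)$ in place of $X$). The only part requiring new content is transferring the proper action with bounded torsion from $X$ to $W(X)$, and you identify this correctly: a vertex stabiliser in $W(X)$ equals the set-stabiliser of the corresponding closed $2$-cell of $X$, hence is finite and of bounded order, and since $W(X)$ is assumed uniformly locally finite, finite vertex stabilisers suffice for properness.
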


We believe that Theorem~\ref{main:B6} holds for infinitely generated subgroups of $G$ without the assumption of the uniform local finiteness of $W(X)$.
The reason for that assumption in Lemma~\ref{l:flattorus_B6} is that it is deduced from the systolic Flat Torus Theorem proved at the moment only for uniformly locally finite systolic complexes \cite[Thm 6.1]{Elsner2009}.

Finally, we complete the proof of Theorem~\ref{main:Artin}. By \cite[Thm 5.6]{HuaOsa2017}, a group acting properly on the Cayley complex for the standard presentation of an Artin group of extra-large type acts properly on a uniformly locally finite systolic complex. Therefore,
we can also apply Lemma~\ref{l:flattorus_B6} to 
{\kol deduce the case of infinitely generated subgroups in Theorem~\ref{main:Artin} from the finitely generated case covered in Section~\ref{s:Ar}.}

\appendix
\label{s:app}
\section{When hypergraphs are trees for $2$-dimensional Artin groups\\ by Jon McCammond, Damian Osajda, and Piotr Przytycki}

In this appendix, we generalise Proposition~\ref{l:hypgr} to the following.
Here $X$ is the Cayley complex for the standard presentation of an Artin group $A_\Gamma$.

\begin{prop}
	\label{l:hypgr2}
Suppose that $A_\Gamma$ is a $2$-dimensional Artin group. Then each hypergraph $\Lambda$ in $X$ is a tree if and only if $\Gamma$ has no triangle with an edge labelled by $2$.
\end{prop}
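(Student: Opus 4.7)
The plan is to prove both implications of the biconditional separately.

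For the ``only if'' direction I need to show that if $\Gamma$ contains a triangle $abc$ with $\{a,b\}$ labelled $2$, then some hypergraph in $X$ has a cycle. By $2$-dimensionality of $A_\Gamma$ the labels $q,r$ on $\{a,c\},\{b,c\}$ satisfy $1/q+1/r\le 1/2$, so $q,r\ge 3$. Let $\lambda$ be the hypergraph in the block $X_{ac}\subset X$ through $e_0=\{1,a\}$; by the single-edge case of the proof of Proposition~\ref{l:hypgr} (which works for any label $\ge 3$), $\lambda$ is a tree. The commutation $ab=ba$ gives a hypergraph edge $e_0\leftrightarrow be_0=\{b,ab\}$ via the commuting square at $1$, so $b$ stabilises the ambient hypergraph $\Lambda\supset\lambda$. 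The key step is to produce a second $a$-edge $e_n=\{a^n,a^{n+1}\}\in\lambda$ for some $n\ge 1$, equivalently to show that some $a^n$ also stabilises $\Lambda$. Once this is done, $\langle a^n,b\rangle\cong\mathbb{Z}^2$ acts freely on the vertex set of $\Lambda$ (it acts freely on $X$-edges), so by Bass--Serre theory $\Lambda$ cannot be a tree. Concretely the cycle arises by concatenating a $\lambda$-path $\pi$ from $e_0$ to $e_n$ with the commuting-square edges $e_n\leftrightarrow be_n$ and $be_0\leftrightarrow e_0$ and the $b$-translate $b\pi^{-1}\subset b\lambda$; it is embedded because $A_{ac}$ and $bA_{ac}$ are disjoint cosets of $A_{ac}$ in $A_\Gamma$.

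For the ``if'' direction I adapt the proof of Proposition~\ref{l:hypgr}. The single-edge case for label $m\ge 3$ is covered verbatim by that proof, and for $m=2$ the block $X_{ab}$ is a Euclidean grid of squares whose hypergraphs are parallel lines of edges, hence trees. For general $\Gamma$ without $2$-triangles, the blocks argument of Proposition~\ref{l:hypgr} proceeds by: extracting a word $v=v_1\cdots v_m$ representing $1\in A_\Gamma$ from a hypothetical minimal hypergraph cycle, with each $v_j$ in the generators of a visited block $B_j$; building a minimal $\mathcal{R}$-reduced diagram $M$ for $v$; invoking \cite[Lem 8]{AppSch1983} to get $C(6)$ and \cite[Lem 5]{AppSch1983} together with Lemma~\ref{lem:strips} to obtain a simple boundary region $D$ of $M$; finding an occurrence of $p_{m_D-1}(x,y)$ on its outside boundary via Lemma~\ref{lem:corner}; and extracting a single-letter syllable in some $v_j$ that, via Lemma~\ref{l:block} applied to the block containing it, yields the contradiction.

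The two main obstacles are the following. In the ``only if'' direction, producing $e_n\in\lambda$ is not immediate: no single $2q$-gon contains both $e_0$ and any $\{a^n,a^{n+1}\}$, so the hypergraph path within $\lambda$ must be assembled from a chain of $2q$-gons, exploiting the fact that $\lambda$ lifts the wall through $\{1,a\}$ in the Coxeter Cayley complex $X_{W_{ac}}$ and is preserved by a nontrivial element of the parabolic subgroup stabilising that wall. In the ``if'' direction, the contradiction step is more delicate than in the extra-large case because $m_D$ may be only $3$, making $p_{m_D-1}(x,y)=xy$ of length only $2$, or even $2$ (in which case Lemma~\ref{lem:corner} does not apply directly and commuting-square relators in $M$ must be handled by an ad hoc argument). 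The no-$2$-triangle hypothesis enters in guaranteeing that, when the single-letter syllable falls in a block $B_j=X_{yz}$ with generators $\{y,z\}\ne\{x,y\}$, one has $\{x,y,z\}\not\subset\Gamma$, which is precisely the combinatorial constraint needed for Lemma~\ref{l:block} to yield the expected contradiction.
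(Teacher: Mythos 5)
Your proposal identifies the right structure but, as you yourself flag, leaves both directions with unfilled gaps, and the paper's argument diverges from yours at exactly those points.

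For the ``only if'' direction, your plan is to show that $\langle a^n,b\rangle\cong\Z^2$ preserves $\Lambda$ and acts freely, whence $\Lambda$ cannot be a tree. The final Bass--Serre step is fine, but the crucial claim that some $a^n$ stabilises $\Lambda$ is not established, and your suggested route for filling it is problematic: since $A_\Gamma$ is $2$-dimensional, $W_{ac}$ is a \emph{finite} dihedral group, its Cayley complex is a single $2q$-gon, and the ``wall through $\{1,a\}$'' is just a chord whose stabiliser in $W_{ac}$ is $\langle a\rangle\cong\Z/2$ --- this furnishes no usable element of $A_{ac}$, and it is not clear that $\lambda$ (or even $\Lambda$) meets the $a$-axis in a second edge. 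The paper sidesteps the whole issue by exhibiting an explicit reduced van Kampen diagram with $12$ regions (Example~\ref{exa:app}) --- four commuting squares, four $ac$-regions, four $bc$-regions --- and pointing to a visible hypergraph cycle in it.

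For the ``if'' direction, the plan to recycle the small-cancellation machinery of Proposition~\ref{l:hypgr} really does break down, as you note: when a boundary region of $M$ is a commuting square (label $2$), Lemma~\ref{lem:corner} yields nothing (the corner word $p_{m-1}$ degenerates to a single letter), and the syllable-extraction step that feeds Lemma~\ref{l:block} fails. You leave the ``ad hoc argument'' for such squares unspecified, and I don't see a quick fix within that framework. The paper instead replaces the corner argument entirely with Lemma~\ref{lem:app}, a $\mathrm{CAT}(0)$ statement in the (modified Moussong-metrised) Coxeter Cayley complex $\Sigma$: for adjacent $2$-cells $\sigma,\tau$, any wall through $\sigma$ is disjoint from or equal to some wall through $\tau$. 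This is precisely where the ``no triangle with a $2$-labelled edge'' hypothesis is used, to keep the modified metric $\mathrm{CAT}(0)$ and certain Alexandrov angles $\geq\pi/2$. From that lemma the paper deduces that the minimal diagram $M$ has \emph{no} $2$-cells, so $M$ is a tree whose leaves correspond to trivial $\delta_j$'s, a contradiction. So where your outline keeps Lemmas~\ref{lem:corner} and~\ref{lem:strips} as the engine, the paper's proof of the ``if'' direction dispenses with them altogether.
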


By Example~\ref{exa:polygons} and Proposition~\ref{l:hypgr2}, the Main Theorem applies to $X'$ implying the following.

\begin{theorem}
\label{main:Artin2}
Let $A_\Gamma$ be a $2$-dimensional Artin group such that $\Gamma$ has no triangle with an edge labelled by $2$.
Suppose that $G$ acts {\kol almost freely} on $X$. Then any finitely generated subgroup of $G$ is virtually cyclic, or virtually $\mathbb{Z}^2$, or contains a nonabelian free group.
\end{theorem}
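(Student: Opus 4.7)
The plan is to derive Theorem~\ref{main:Artin2} from the Main Theorem by showing that the barycentric subdivision $X'$ of the Cayley complex $X$ is a recurrent triangle complex with respect to any finitely generated subgroup $H \leq G$. The argument parallels the derivation of the finitely generated case of Theorem~\ref{main:Artin} from Proposition~\ref{l:hypgr}, with Proposition~\ref{l:hypgr2} taking the place of Proposition~\ref{l:hypgr}.

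First I would note that $X$ is simply connected (being a Cayley $2$-complex) and that each of its $2$-cells is a regular Euclidean $2m$-gon, where $m \geq 2$ ranges over the labels of $\Gamma$. Since $A_\Gamma$ is $2$-dimensional, all cells embed in $X$, so Example~\ref{exa:polygons} applies and equips $X'$ with a $G$-invariant set $A$ of directions (those coming from perpendiculars to opposite edge pairs that subdivide them in the ratio $1\colon 3$) satisfying conditions (i)--(iv) of Definition~\ref{def:build-like} for any automorphism group of $X$.

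The central step is verifying condition (v). Following the template of Example~\ref{exa:B6}, any finite sequence $a_0, a_1, \ldots, a_n$ in $A$ with $a_{i+1} \in H(I(a_i))$ extends to a sequence whose endpoints lie at the $1\colon 3$ division points of edges of $X$, and concatenating the corresponding geodesic arcs yields a local geodesic $\gamma \to X$ which, up to a translation by distance $\tfrac{1}{4}$, factors through a hypergraph $\Lambda \to X$. Because $\Gamma$ has no triangle with an edge labelled by $2$, Proposition~\ref{l:hypgr2} tells us that $\Lambda$ is a tree, and hence embeds in $X$. The embedding of $\Lambda$ rules out both $a_0 = a_n$ and $a_0 = I(a_n)$, so condition (v) holds. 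Thus $X'$ is recurrent with respect to any automorphism group of $X$.

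Now let $H \leq G$ be a finitely generated subgroup. By Remark~\ref{rem:hereditary}, $X'$ is recurrent with respect to $H$; properness of the action and the uniform bound on the orders of finite subgroups are inherited automatically from $G$. The only remaining hypothesis of the Main Theorem is the absence of inversions for $H$; this is eliminated by passing to one further barycentric subdivision, as justified by Remark~\ref{rem:subdivision}. The Main Theorem then yields that $H$ is virtually cyclic, virtually $\mathbb{Z}^2$, or contains a nonabelian free group. The substantive work has been absorbed into Proposition~\ref{l:hypgr2}, whose proof occupies the rest of the appendix; the present deduction is then essentially a verification that the recurrence machinery of Section~\ref{sec:like} applies verbatim.
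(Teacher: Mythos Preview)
Your deduction is correct and matches the paper's own one-sentence justification (``By Example~\ref{exa:polygons} and Proposition~\ref{l:hypgr2}, the Main Theorem applies to $X'$''), with the details filled in appropriately. One small point: the extra pass through Remark~\ref{rem:subdivision} is unnecessary, since any simplicial automorphism of $X$ already preserves the vertex types of the barycentric subdivision $X'$ and hence acts on $X'$ without inversions; also, the embedding of $2$-cells follows from the relator words being cyclically reduced rather than from $2$-dimensionality per se.
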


Note that if additionally $\Gamma$ has no square with at least three edges labelled by $2$, then by \cite[Thm~5.6]{HuaOsa2017} $G$ acts properly on a uniformly locally finite systolic complex and therefore we can apply  Lemma~\ref{l:flattorus_B6} to show that Theorem~\ref{main:Artin2} holds also for $G$ infinitely generated.

We first justify the `only if' part of Proposition~\ref{l:hypgr2}.

\begin{exa}
\label{exa:app}
Suppose that $\Gamma$ has a triangle $abc$ with the edge $ab$ labelled by~$2$. Consider the following $\mathcal{R}$-reduced diagram $M$ consisting of $12$ regions. The $4$ central regions have boundaries labelled by $aba^{-1}b^{-1}$. The $4$ top and bottom regions have boundaries labelled by $a$ and $c$, and the $4$ left and right regions have boundaries labelled by $b$ and $c$ (see Figure~\ref{f:exA3}). $M$ contains a cycle of $\Lambda$, which is thus not a tree.
\end{exa}

\begin{figure}[h!]
	\centering
	\includegraphics[width=0.75\textwidth]{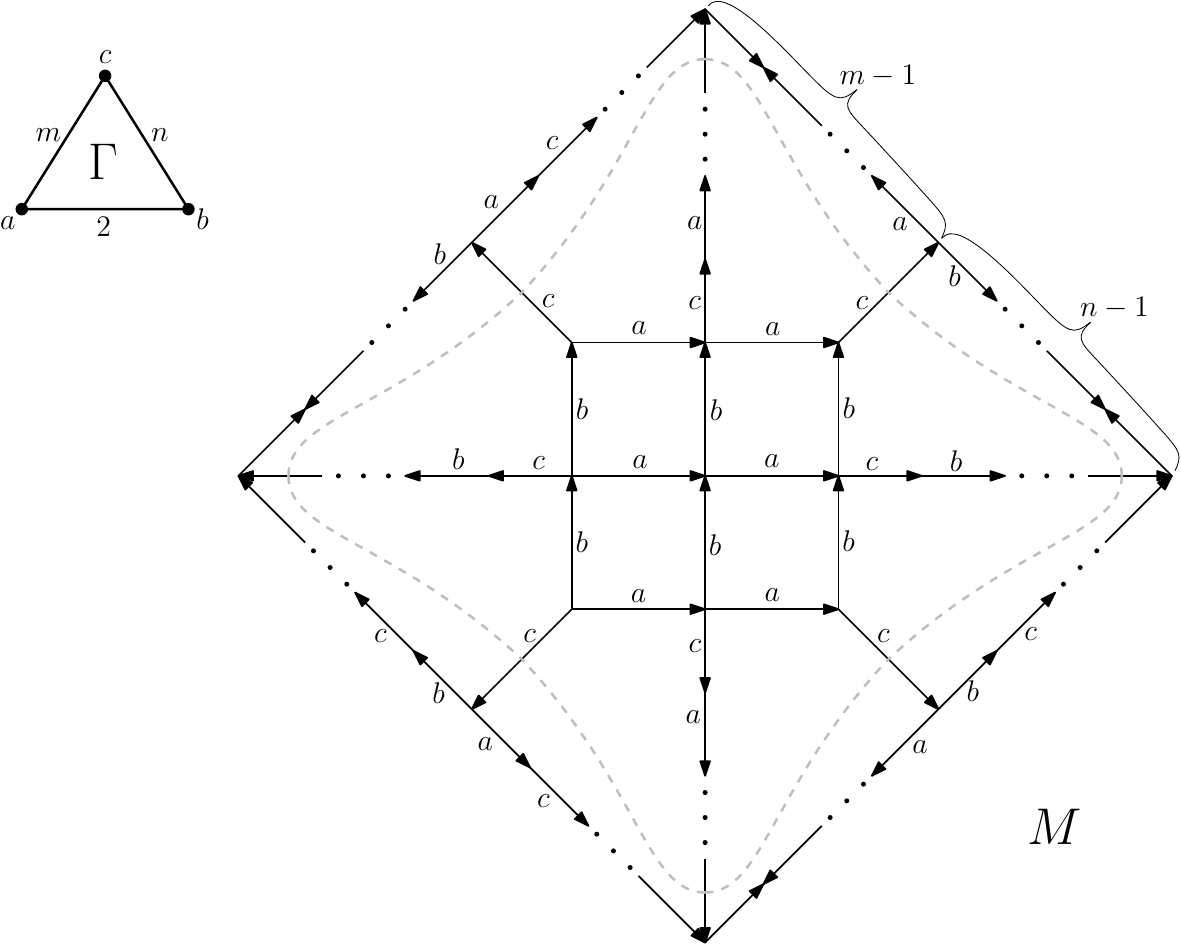}
	\caption{Example~\ref{exa:app}. A cycle in $\Lambda$ is marked by the dashed line.}
	\label{f:exA3}
\end{figure}

For the `if' part of Proposition~\ref{l:hypgr2}, we need the following.

\begin{lemma}
\label{lem:app}
Let $\Sigma$ be the Cayley complex of the Coxeter group $W_\Gamma$ such that $\Gamma$ has no triangle with an edge labelled by $2$. Then for any $2$-cells $\sigma, \tau $ of~$\Sigma$ sharing an edge, and a hypergraph $\Lambda_\sigma$ intersecting $\sigma$, there is a hypergraph $\Lambda_\tau$ intersecting $\tau$ that is disjoint from or equal to $\Lambda_\sigma$.
\end{lemma}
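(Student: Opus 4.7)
The approach is to identify hypergraphs in the Coxeter Cayley complex $\Sigma$ with reflections of $W_\Gamma$: each hypergraph is the wall (fixed set) of a unique reflection, and two distinct hypergraphs meet as subsets of $\Sigma$ only at centers of $2$-cells they both cross, which happens precisely when the corresponding pair of reflections generates a finite dihedral subgroup of $W_\Gamma$. Using the $W_\Gamma$-action on $\Sigma$ I reduce to the case where $e=\{1,a\}$ for some $a\in V\Gamma$; then $\sigma$ and $\tau$ are the $2$-cells supported on the cosets $\langle a,b\rangle$ and $\langle a,d\rangle$ for some $b,d\in V\Gamma$ with $m_{ab},m_{ad}<\infty$, and I may assume $b\ne d$ (else $\sigma=\tau$ and there is nothing to prove). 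Hypergraphs through $\sigma$ correspond bijectively to reflections in $\langle a,b\rangle$, and likewise for $\tau$; write $r_\sigma\in\langle a,b\rangle$ for the reflection giving $\Lambda_\sigma$.

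If $r_\sigma=a$, then $\Lambda_\sigma$ crosses $e$ and hence $\tau$, so I take $\Lambda_\tau:=\Lambda_\sigma$. Otherwise, by the classical intersection formula $\langle a,b\rangle\cap\langle a,d\rangle=\langle a\rangle$ for standard parabolic subgroups of $W_\Gamma$, we have $r_\sigma\notin\langle a,d\rangle$, so $\Lambda_\sigma$ does not cross $\tau$. The task becomes finding a reflection $r_\tau\in\langle a,d\rangle$ with $\langle r_\sigma,r_\tau\rangle$ infinite in $W_\Gamma$; since $\langle r_\sigma,r_\tau\rangle\subseteq\langle a,b,d\rangle$, this finiteness question is internal to that parabolic subgroup.

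If $m_{bd}=\infty$, then $\langle a,b,d\rangle=\langle a,b\rangle\ast_{\langle a\rangle}\langle a,d\rangle$ is an amalgamated free product, and for any reflection $r_\tau\in\langle a,d\rangle\setminus\langle a\rangle$ (e.g.\ $r_\tau=d$) the product $r_\sigma r_\tau$ is cyclically reduced of amalgamation-length $2$, hence of infinite order. If $m_{bd}<\infty$, then the hypothesis forces $m_{ab},m_{ad},m_{bd}\ge 3$ and $\langle a,b,d\rangle$ is an infinite $2$-dimensional Coxeter triangle group, either Euclidean $\widetilde A_2$ or hyperbolic, acting on its Davis complex $\Delta$. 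The finite dihedral $\langle a,d\rangle$ fixes a unique point $p\in\Delta$ through which pass all of its $m_{ad}$ walls at equal angular spacing $\pi/m_{ad}$; the wall of $r_\sigma$ must avoid $p$, for otherwise $\langle r_\sigma,a,d\rangle=\langle a,b,d\rangle$ would fix $p$, contradicting that an infinite group acting properly on a $\mathrm{CAT}(0)$ space has no global fixed point. To find a pencil wall at $p$ disjoint from the wall of $r_\sigma$, I argue in the $\widetilde A_2$ case by matching parallel classes (each of the three parallel classes of walls is represented among the walls of $\langle a,d\rangle$, so the class of $r_\sigma$'s wall supplies a parallel, hence disjoint, $\langle a,d\rangle$-wall), and in the hyperbolic case by noting that the set of directions at $p$ along which geodesics meet $r_\sigma$'s wall forms an open arc of measure $2\Pi(d)<\pi$, where $\Pi$ is the angle of parallelism and $d=\mathrm{dist}(p,r_\sigma\text{-wall})$, and then verifying that $2\Pi(d)\le\pi-\pi/m_{ad}$, which leaves at least one of the $m_{ad}$ evenly spaced pencil walls in the complementary non-meeting arc.

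\textbf{The main obstacle} is the quantitative step in the hyperbolic sub-case: establishing that $d$ is always large enough — depending on $m_{ab},m_{ad},m_{bd}\ge 3$ — for the inequality $2\Pi(d)\le\pi-\pi/m_{ad}$ to hold. I expect this to reduce to bounding the altitude from the vertex $p$ in the fundamental hyperbolic triangle via the hyperbolic law of cosines and the minimum angle at $q$ between $r_\sigma$'s wall and side $a$, which is at least $\pi/m_{ab}$; this is the technical heart of the argument.
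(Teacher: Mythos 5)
The proposal takes a genuinely different route from the paper, and it has a real gap that you yourself flag.

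\textbf{Comparison with the paper's approach.} The paper's proof is synthetic: it modifies the Moussong metric so that every non-square $2$-cell is coned off into Euclidean equilateral triangles of side $1$. The link condition at vertices of $\Sigma$ then requires that every $3$-cycle of the link graph (which is $\Gamma$ restricted to finite labels) have total edge-length $\geq 2\pi$; since each link edge now has length $\pi/2$ (for $m=2$) or $2\pi/3$ (for $m\geq 3$), this holds precisely when $\Gamma$ has no triangle with an edge labelled $2$. With the modified metric still $\mathrm{CAT}(0)$, the paper exhibits a geodesic path $\alpha$ (a half-edge concatenated with a segment to the centre of $\tau$) meeting both walls at Alexandrov angle $\pi/2$, from which disjointness follows by convexity. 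Your proposal instead pushes everything into the rank-$3$ parabolic $\langle a,b,d\rangle$ and argues via the trichotomy amalgam / $\widetilde A_2$ / hyperbolic triangle group. The amalgam case ($m_{bd}=\infty$: normal form gives $r_\sigma d$ of infinite order) and the $\widetilde A_2$ case (match parallel classes at $p$) are fine, and the reduction itself is sound since disjointness of walls is equivalent to $\langle r_\sigma,r_\tau\rangle$ being infinite, which is an internal question for a parabolic.

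\textbf{The gap.} The hyperbolic sub-case is not proved; you explicitly write ``I expect this to reduce to bounding the altitude\ldots this is the technical heart of the argument,'' and that heart is missing. Concretely, what is needed is: for every hyperbolic $(m_{ab},m_{ad},m_{bd})$ with each $m\geq 3$ and $\sum 1/m<1$, and for every reflection $r_\sigma\in\langle a,b\rangle$ with $r_\sigma\neq a$, the distance $d$ from the fixed point $p$ of $\langle a,d\rangle$ to the wall of $r_\sigma$ satisfies $2\Pi(d)\leq\pi-\pi/m_{ad}$, equivalently $\sinh(|pq|)\sin(\pi/m_{ab})\geq\tan(\pi/(2m_{ad}))$, where $q$ is the centre of $\sigma$ and $|pq|$ is governed by the hyperbolic law of cosines. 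You have reduced the claim to a concrete trigonometric inequality and minimised over $r_\sigma$ correctly (the worst wall is at angle $\pi/m_{ab}$ to the side $pq$), but the inequality itself is simply asserted, not verified, and it is not obviously true without a computation over the family of admissible triples. Until that inequality is established for all such triples, the proposal does not prove the lemma. By contrast, the paper's modified-metric argument avoids this entirely: it converts the hypothesis directly into a local link condition, so no case analysis over hyperbolic triangle groups and no angle-of-parallelism estimate are needed.

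Two minor points you should also make explicit if you pursue this route: the $\widetilde A_2$ parallel-class argument needs the observation that the wall of $r_\sigma$ and the parallel wall through $p$ are distinct (true, since one passes through $p$ and the other does not), and in the hyperbolic shadow argument the borderline case where the chosen wall through $p$ is only asymptotic to the wall of $r_\sigma$ is still acceptable, since asymptotic walls are disjoint in $\mathbb{H}^2$.
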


\begin{proof} Denote $e=\sigma\cap \tau$. Note that we can assume that $\Lambda_\sigma$ is disjoint from~$e$, since otherwise we can take $\Lambda_\tau=\Lambda_\sigma$.

Suppose first that $\tau$ is a square. Then take $\Lambda_\tau$ to be the hypergraph intersecting $\tau$ but not $e$. Let $f$ be an edge of $\sigma$ sharing a vertex $v$ with $e$. If $\Lambda_\sigma$ intersects $f$, then let $\alpha$ be the path obtained by concatenating at $v$ a half-edge of $f$  and a half-edge in $\tau$ ending at $\Lambda_\tau$. Note that $\alpha$ is a geodesic in the Moussong metric on $\Sigma$, by the assumption that $\Gamma$ has no triangle with an edge labelled by $2$. Moreover, the Alexandrov angle between the endpoints of $\alpha$ and $\Lambda_\sigma$ and $\Lambda_\tau$ are $\frac{\pi}{2}$. Consequently $\Lambda_\sigma$ and $\Lambda_\tau$ are disjoint.
Analogously, for the hyperplane $\Lambda'_\sigma$ intersecting the other edge of $\sigma$ sharing a vertex with $e$, we have that $\Lambda'_\sigma$ and $\Lambda_\tau$ are disjoint. Consequently, $\Lambda_\tau$ is contained in the same component of $\Sigma-\Lambda_\sigma\cup \Lambda'_\sigma$ as $e$. Then $\Lambda_\tau$ is also disjoint from all other hyperplanes intersecting $\sigma$ but not $e$.

If $\sigma$ is a square, then the same argument shows that we can take $\Lambda_\tau$ to be any hyperplane intersecting $\tau$ but not $e$.

It remains to consider the case where neither of $\sigma, \tau$ are squares.
We modify the Moussong metric on $\Sigma$ in the following way.
Every square remains a Euclidean square of side length $1$. Every $2$-cell that is not a square is subdivided
into triangles by (Moussong) geodesic segments joining vertices with the centre and we turn each triangle into
the Euclidean equilateral triangle of side length $1$.
Note that because $\Gamma$ has no triangle with an edge labelled by~$2$, this metric is still CAT(0).
As before, let $f$ be an edge of $\sigma$ sharing a vertex $v$ with $e$.
Let $g$ be the edge of $\tau$ sharing a vertex with $e$ distinct from $v$, and let $\alpha$ be the path obtained by concatenating at $v$ a half-edge of $f$ and the geodesic segment from $v$ to the centre of $\tau$.
If $\Lambda_\sigma$ intersects~$f$, then take $\Lambda_\tau$ to be the hypergraph intersecting $g$.
Again $\alpha$ is a geodesic meeting $\Lambda_\sigma$ and (since we modified the metric) $\Lambda_\tau$ at Alexandrov angle $\frac{\pi}{2}$. Consequently $\Lambda_\sigma$ and $\Lambda_\tau$ are disjoint.

Finally, consider a hyperplane $\Lambda'_\sigma$ intersecting $\sigma$ but none of its edges sharing vertices with $e$.  Let $w$ be the centre of $\sigma$, and let $\lambda_\sigma$ be the component of $\Lambda_\sigma-w$ intersecting $f$. Let $\lambda'_\sigma$ be the component of $\Lambda'_\sigma-w$
intersecting $\partial \sigma$ earlier if we traverse it starting from $e$ and ending with $f$. Since we modified the metric, $\Lambda''=\lambda_\sigma\cup w \cup \lambda'_\sigma$ is convex. Using the  path $\alpha$  we obtain that $\Lambda''$ is disjoint from $\Lambda_\tau$. Since $\Lambda'_\sigma$ is contained in the closure of the component of $\Sigma-\Lambda''$ that does not contain $e$, we have that
$\Lambda'_\sigma$ and $\Lambda_\tau$ are disjoint.
\end{proof}

\begin{proof}[Proof of Proposition~\ref{l:hypgr2}]
The `only if' part follows from Example~\ref{exa:app}. For the `if' part, given a cycle $\gamma$ in $\Lambda$, define $D_i, B_j, \delta_j$ and $M$ as in the proof of Proposition~\ref{l:hypgr}. Note that $\delta_j$ are still nontrivial, since Lemma~\ref{l:block} obviously holds for $m=2$ and the word $a^kb^l$.

We claim that $M$ has no $2$-cells. Indeed, otherwise let $\tau_X$ be a $2$-cell of~$X$ in the image of~$M$ containing an edge $e$ of some $\delta_j$, and let $\sigma_X$ be a $2$-cell of $B_j$ containing~$e$.  Let $\sigma, \tau$ be the projections of $\sigma_X,\tau_X$ to the Cayley complex $\Sigma$ of $W_\Gamma$. Let~$\Lambda_\sigma$ be the projection to $\Sigma$ of $\Lambda$,  which intersects $\sigma$ since $D_{i_j+1}$
lies in the same block as $\sigma_X$, and hence projects also to $\sigma$. By Lemma~\ref{lem:app} there is a hypergraph $\Lambda_\tau$ intersecting $\tau$ that is disjoint from or equal to $\Lambda_\sigma$. Let $M'$ be a diagram obtained from $M$ by attaching along $\delta_j$ diagrams in $B_j$ and such that $\gamma$ traverses consecutively its boundary $2$-cells.
The component of the preimage of $\Lambda_\tau$ in $M'$ intersecting $\tau_X$ forms a cycle contradicting the minimality of $M$. This justifies the claim.

Thus $M$ is a tree. Each leaf of that tree is a trivial $\delta_j$, which is a contradiction.
\end{proof}

\bibliography{mybib}{}
\bibliographystyle{plain}

\end{document}